\documentclass[12pt]{amsart}
\usepackage{amsmath, amssymb}
\let\temp\rmdefault
\usepackage{mathpazo}
\usepackage{lmodern}
\usepackage{mathrsfs}
\let\rmdefault\temp
\usepackage[margin=1in]{geometry}
\usepackage[amsmath]{empheq}
\usepackage{pdfrender,xcolor}
\usepackage{comment}
\usepackage{mathtools}
\usepackage{multicol}
\usepackage{enumitem, multicol}
\usepackage{setspace}
\usepackage[most]{tcolorbox}
\tcbset{colback=yellow!10!white, colframe=blue!50!blue, highlight math style= {enhanced, 
		colframe=blue,colback=red!10!white,boxsep=0pt}}
\usepackage[linkcolor=blue]{hyperref}
\hypersetup{colorlinks = true}
\newtheorem{theorem}{Theorem}[section]
\newtheorem{lemma}[theorem]{Lemma}

\newtheorem{proposition}[theorem]{Proposition}

\newtheorem{question}{Question}

\newtheorem{definition}[theorem]{Definition}
\newtheorem{example}[theorem]{Example}

\newtheorem{remark}[theorem]{Remark}

\theoremstyle{definition}
\numberwithin{equation}{section}

\newcommand{\norm}[1]{\|#1\|}

\begin{document}
	
	\title[On the geometry of $G$-norm]{On the geometry of $G$-norm}
	\author{Lakshmi Kanta Dey}
	\address{Lakshmi Kanta Dey, Department of Mathematics, National Institute of Technology, Durgapur-713209, West Bengal, India.}
	\email{lakshmikdey@yahoo.co.in}
	\author{Subhadip Pal}
	\address{Subhadip Pal, Department of Mathematics, National Institute of Technology, Durgapur-713209, West Bengal, India.}
	\email{palsubhadip2@gmail.com}
	\subjclass[2020]{Primary 46B20, Secondary 46B04, 47B01.}
	\keywords{Relative spear operator; Numerical index; Smoothness; Isometry.\\
		The research of Mr. Subhadip Pal is supported by the University Grants Commission, Government of India.}

	\begin{abstract}
		Let $X$ and $Y$ be Banach spaces and let $G \in L(X,Y)$ with $\|G\|=1$. We study the geometry of $G$-(semi-)norm on $L(X,Y)$, defined by
		\[
		\|T\|_G := \inf_{\delta>0}\sup\{\|Tx\|: \|x\|=1, \|Gx\|>1-\delta\},
		\]
		considering it as a norm ($G$-norm), and further explore the associated numerical indices. In particular, we characterize relative spear operators, that is, operators for which the numerical radius with respect to $G$ coincides with the $G$-norm. Relations among the numerical indices and their invariance under isometric isomorphisms are established. We further obtain a description of the dual unit ball of $(L(X,Y),\|\cdot\|_G)$ and characterize smooth points of its unit ball. In finite-dimensional Hilbert spaces, we prove that relative spear operators are partial isometries. Finally, we establish some equivalent criteria for which the $G$-norm is achieved by the norm attainment set of a norm-attaining operator $G$.
	\end{abstract}   
	
	\maketitle
	\section{Introduction}
	Throughout the paper, $X$, $Y$ and $Z$ denote the Banach spaces over the scalar field $\mathbb{K}$, where $\mathbb{K}=\mathbb{R}$ or $\mathbb{C}$. The zero vector of a Banach space is denoted by $\theta$, except the scalar field. The unit sphere and closed unit ball of $X$ are denoted by $S_X$ and $B_X$, respectively. The dual space of $X$ is denoted by $X^*$. The space of all bounded linear operators from $X$ to $Y$ is denoted by $L(X,Y)$ and we write $L(X)$ when $X=Y$. We denote by $S^1:=\{\lambda\in\mathbb{K} : |\lambda|=1\}.$ We now introduce the relevant definitions. For $x\in X$, we denote by $J_{\|.\|}(x) :=\{f\in S_{X^*}: f(x)=\|x\|\}$ the set of \emph{support functionals at $x$}. $M_G :=\{x\in S_X : \|Gx\|=\|G\|\}$ denotes the set of norm-attaining points of $G\in L(X, Y)$. The classical numerical range of an operator $T\in L(X)$ is defined by
    \[
    V(T):=\{x^*(Tx): x\in S_X,\ x^*\in S_{X^*},\ x^*(x)=1\},
    \]
    and the associated numerical radius and numerical index are given by
    \[
    \nu(T):=\sup\{|\lambda|:\lambda\in V(T)\}\qquad \text{and} \qquad n(X):=\inf\{\nu(T): T\in L(X),\ \|T\|=1\}.
    \]
    A fundamental line of research in operator theory studies the relationship between the operator norm and the numerical radius \cite{BonsallDuncan, Duncan}. In particular, the condition $n(X)=1$, that is, $\nu(T)=\|T\|$ for all $T\in L(X)$, is known to be equivalent to the identity
    \begin{equation}\label{eq:intro-classical}
    \max_{\omega\in S^1}\|{\rm Id}+\omega T\|=1+\|T\|.
    \end{equation}
    Motivated by this framework, Ardalani \cite{Ardalani} introduced a generalized \emph{numerical range and radius with respect to a fixed operator $G\in L(X,Y)$}, defined by
    \begin{align*}
        V_G(T):=\bigcap_{\delta>0}\overline{\{y^*(Tx): x\in S_X,\ y^*\in S_{Y^*},\ \Re y^*(Gx)>1-\delta\}}\\
        \text{and}\quad\nu_G(T):=\max\{|\lambda|:\lambda\in V_G(T)\}\quad\text{respectively}.
    \end{align*}
    It is worth mentioning that the above expression can be treated as
    \begin{align*}
		\nu_G(T) & := \sup\{|\phi(T)| : \phi \in L(X,Y)^*, \|\phi\| = \phi(G) = 1\}\\
		& = \inf_{\delta>0} \sup\{|y^*(Tx)| : y^* \in S_{Y^*}, x \in S_X, \Re  y^*(Gx) > 1 - \delta\}\quad \text{(see, \cite[Theorem 2.1]{MartinRange})}.    
	\end{align*}
    This notion extends the classical numerical radius and leads naturally to the concept of spear operators: a norm-one operator $G$ is called a \emph{spear operator} if for every $T\in L(X,Y)$ there exists $\omega\in S^1$ such that
    \[
    \|G+\omega T\|=1+\|T\|.
    \]
    In particular, the identity operator is a spear operator if and only if $n(X)=1$. More recently, Kadets et. al. \cite{Generatingoperator} introduced a new definition of (semi-)norm in $L(X, Y)$:
	\begin{definition}\cite{Generatingoperator}
		Let $X, Y$ and $Z$ be Banach spaces and let $G \in L(X,Y)$ be a norm-one operator. For $T \in L(X,Z)$, we define the (semi-)norm of $T$ relative to $G$ by
		\[
		\|T\|_G := \inf_{\delta>0}\sup\{\|Tx\|:x\in S_X, \|Gx\|>1-\delta\}.
		\]
	\end{definition}
	\noindent
    This construction can be viewed as a natural generalization of the usual operator norm, which is recovered when $G$ is the identity. When $Z = Y$, one has the inequalities
    \[
    \nu_G(T)\leq \|T\|_G\leq \|T\|, \qquad T\in L(X,Y),
    \]
    and the associated numerical indices are defined by
	\begin{align*}
		n_{G}(X,Y)
		&:= \inf \bigl\{ \nu_{G}(T) : \|T\| = 1 \bigr\}= \max \bigl\{ k \ge 0 : k \|T\| \le \nu_{G}(T),\ T \in  L(X, Y) \bigr\},\\
		n^{(1)}_{G}(X,Y)
		&:= \inf \bigl\{ \|T\|_G : \|T\| = 1 \bigr\}= \max \bigl\{ k \ge 0 : k \|T\| \le \|T\|_G,\ T \in  L(X, Y) \bigr\},\\
		n^{(2)}_{G}(X,Y)
		&:= \inf \bigl\{ \nu_{G}(T) : \|T\|_{G} = 1 \bigr\}= \max \bigl\{ k \ge 0 : k \|T\|_{G} \le \nu_{G}(T),\ T \in  L(X, Y) \bigr\}.
	\end{align*}
	By introducing the definition of $\|T\|_G$, Kadets et. al. studied about the operators $T$ which satisfy the relation $\|T\|_G=\|T\|$, see, \cite{Generatingoperator}. Throughout this article, we assume that $\|\cdot\|_G$ defines a norm and refer to it as the $G$-norm.

	\medskip

	In the spirit of the above developments, we introduce the following definition:
	\begin{definition}
		An operator $G\in L(X,Y)$ with $\|G\|=1$ is called a relative spear operator if $\nu_G(T)=\|T\|_G$ for all $T\in L(X,Y),$ i.e., $n^{(2)}_{G}(X,Y)=1$.
	\end{definition}
    \noindent
    This concept provides a connection between the geometry induced by the $G$-norm and the numerical radius with respect to $G$.

    \medskip

    The aim of this paper is to investigate the geometry of the $G$-norm and its interaction with the numerical radius $\nu_G$, with particular emphasis on relative spear operators. Our main contributions can be summarized as follows.

    \medskip

    \noindent
    First, we obtain a characterization of relative spear operators analogous to \eqref{eq:intro-classical}, showing that the equality $\nu_G(T)=\|T\|_G$ is equivalent to the identity
    \[
    \max_{\omega\in S^1}\|G+\omega T\|_G=1+\|T\|_G.
    \]
    We also establish a relationship among the numerical indices $n_G$, $n_G^{(1)}$ and $n_G^{(2)}$, and prove that $n_G^{(2)}$ is invariant under isometric isomorphisms. Next, we study the geometric structure of the Banach space $(L(X,Y),\|\cdot\|_G)$. In particular, we provide a description of its dual unit ball and use this representation to characterize smooth points of the unit ball. Finally, we specialize our analysis to Hilbert spaces. In the finite-dimensional setting, we show that every relative spear operator is a partial isometry. We further obtain conditions under which the $G$-norm can be computed using the norm-attainment set $M_G$. We conclude the article with three questions for the readers concerning possible extensions of these results to the infinite-dimensional and general Banach space settings.

	\section{Preliminaries}
	We begin this section with some preliminary results that will be needed in the sequel. We first establish an alternative expression of $\|.\|_G$ in $L(X, Y)$. Before that for $T\in L(X, Y)$ we define 
	\[
	S_G(T):=\bigcap_{\delta>0}\overline{\{y^*(Tx): x\in S_X,\ y^*\in S_{Y^*},\ \|Gx\|>1-\delta\}}
	\]
	and 
	\[
	\widetilde{S}_G(T):=\bigcap_{\delta>0}\overline{\{\|Tx\|: x\in S_X, y^*\in S_{Y^*}, \|Gx\|>1-\delta\}}.
	\]
	\begin{proposition}\label{DP1}
		Let $G\in L(X,Y)$ with $\|G\|=1$. Then 
		\[
		\|T\|_G=\max\{|\lambda|:\lambda\in S_G(T)\}=\max\{\lambda:\lambda\in \widetilde{S}_G(T)\}.
		\]
	\end{proposition}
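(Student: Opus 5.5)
The plan is to reduce both equalities to one monotonicity principle for nested compact sets. For $\delta>0$ put
\[
A_\delta:=\{y^*(Tx): x\in S_X,\ y^*\in S_{Y^*},\ \|Gx\|>1-\delta\},\qquad B_\delta:=\{\|Tx\|: x\in S_X,\ \|Gx\|>1-\delta\}.
\]
Both families are nonempty, because $\|G\|=1$ gives points $x\in S_X$ with $\|Gx\|>1-\delta$. They are decreasing as $\delta\downarrow 0$ and bounded by $\|T\|$. Hence $S_G(T)=\bigcap_\delta \overline{A_\delta}$ and $\widetilde S_G(T)=\bigcap_\delta\overline{B_\delta}$ are intersections of nested nonempty compact sets. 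By Cantor's intersection theorem they are nonempty and compact, so the maxima in the statement exist.

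The key observation is that $\sup\{|\lambda|:\lambda\in A_\delta\}=\sup B_\delta=:s_\delta$. Indeed, $|y^*(Tx)|\le\|Tx\|$, and Hahn--Banach supplies $y^*\in S_{Y^*}$ with $y^*(Tx)=\|Tx\|$. By definition $\|T\|_G=\inf_\delta s_\delta=\lim_{\delta\to0}s_\delta$. It therefore suffices to prove the abstract fact below and apply it once to $\overline{A_\delta}$ with $\varphi(\lambda)=|\lambda|$, and once to $\overline{B_\delta}$ with $\varphi(\lambda)=\lambda$.

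\emph{Fact.} If $K_\delta$ are nested nonempty compact subsets of $\mathbb{K}$ and $\varphi$ is continuous, then $\max_{\bigcap K_\delta}\varphi=\inf_\delta\max_{K_\delta}\varphi$.

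The inequality ``$\le$'' is immediate, since $\bigcap_\delta K_\delta\subseteq K_\delta$ for every $\delta$. For ``$\ge$'', choose $\lambda_n\in K_{1/n}$ with $\varphi(\lambda_n)=\max_{K_{1/n}}\varphi$. Pass to a convergent subsequence $\lambda_{n_k}\to\lambda_0$. For each fixed $\delta$, the tail of the subsequence lies in $K_\delta$, so $\lambda_0\in\bigcap_\delta K_\delta$. Then $\varphi(\lambda_0)=\lim_k\varphi(\lambda_{n_k})=\inf_\delta\max_{K_\delta}\varphi$, where the last equality uses monotonicity in $\delta$.

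Finally, closure does not change suprema of continuous functions, so $\max_{\overline{A_\delta}}|\cdot|=s_\delta=\max_{\overline{B_\delta}}(\cdot)$. Combining this with the Fact gives both equalities. The only point needing care is the compactness/diagonal step that produces a limit point lying in every $\overline{A_\delta}$; the rest is bookkeeping. The appearance of $y^*$ in the definition of $\widetilde S_G(T)$ is harmless, since $y^*$ does not affect the set.
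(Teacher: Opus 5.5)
Your proof is correct and follows the same route as the paper. Both use Hahn--Banach to identify $\sup\{|\lambda|:\lambda\in E_\delta(T)\}$ with $\sup\{\|Tx\|:x\in S_X,\ \|Gx\|>1-\delta\}$, and then interchange the infimum over $\delta$ with the maximum over a decreasing family of nonempty compact sets; the difference is that you actually prove this interchange via the limit-point argument, whereas the paper only asserts it.
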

	\begin{proof}
		For $T\in L(X,Y)$ and $\delta>0$ define $E_{\delta}(T):=\bigl\{y^{*}(Tx): x\in S_X, y^{*}\in S_{Y^{*}}, \|Gx\|>1-\delta \bigr\}$ and $\widetilde{E}_{\delta}(T):=\bigl\{\|Tx\|: x\in S_X, \|Gx\|>1-\delta \bigr\}.$ Then $S_G(T)=\bigcap_{\delta>0}\overline{E_{\delta}(T)}$ and $\widetilde{S}_G(T)=\bigcap_{\delta>0}\overline{\widetilde{E}_{\delta}(T)}$. By using the Hahn-Banach Theorem, for every fixed $\delta>0$ we have
		\begin{align*}
			\sup\{\|Tx\|:x\in S_X, \|Gx\|>1-\delta\}&= \sup\{|y^{*}(Tx)|: x\in S_X, y^{*}\in S_{Y^{*}}, \|Gx\|>1-\delta \}\\
			&= \sup\{|\lambda|:\lambda\in E_{\delta}(T)\}.    
		\end{align*}
		Therefore, by the definition of the $G$-norm,
		\[
		\|T\|_{G}=\inf_{\delta>0}\sup\{\|Tx\|:x\in S_X,\|Gx\|>1-\delta\}=\inf_{\delta>0} \sup\{|\lambda|:\lambda\in E_{\delta}(T)\}.
		\]
		Since the family $\{\overline{E_{\delta}(T)}\}_{\delta>0}$ is a decreasing family of nonempty compact subsets of $\mathbb{C}$ as $\delta$ tends to zero, we have 
		\[
		\inf_{\delta>0}\max\{|\lambda|:\lambda\in\overline{E_{\delta}(T)}\}=\max\{|\lambda|:\lambda\in \bigcap_{\delta>0}\overline{E_{\delta}(T)}\}.
		\]
		Also, we have $\inf_{\delta>0}\max\{|\lambda|:\lambda\in\overline{E_{\delta}(T)}\}=\inf_{\delta>0} \sup\{|\lambda|:\lambda\in E_{\delta}(T)\}$. Applying this we get $\|T\|_{G}=\max\{|\lambda|:\lambda\in\bigcap_{\delta>0}\overline{E_{\delta}(T)}\}=\max\{|\lambda|:\lambda\in S_G(T)\}.$ A similar approach gives us $\|T\|_{G}=\max\{\lambda:\lambda\in\bigcap_{\delta>0}\overline{\widetilde{E}_{\delta}(T)}\}=\max\{\lambda:\lambda\in \widetilde{S}_G(T)\}.$ This completes the proof.
	\end{proof}
	
	We further reduce the expression of $S_G(T)$ in the following result.
	
	\begin{proposition}\label{P2.2}
		Let $G\in S_{L(X,Y)}$ and let $T\in L(X,Y)$. Then
		\begin{itemize}
			\item[(i)] $S_G(T)=
			\Big\{\lim_{n} y_n^*(Tx_n):
			x_n\in S_X, y_n^*\in S_{Y^*}, \|Gx_n\|\to 1 \Big\}.$
			\item[(ii)] $\widetilde{S}_G(T)=
			\Big\{\lim_{n} \|Tx_n\|:
			x_n\in S_X, \|Gx_n\|\to 1 \Big\}.$ 
		\end{itemize}
	\end{proposition}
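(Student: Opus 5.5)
We prove (i) by double inclusion; (ii) follows by the same argument with $y_n^*(Tx_n)$ replaced by $\|Tx_n\|$. Write $E_\delta(T)$ and $\widetilde{E}_\delta(T)$ as in the proof of Proposition \ref{DP1}. Let $A$ denote the right-hand side of (i), that is, the set of all limits $\lim_n y_n^*(Tx_n)$ taken over sequences $x_n\in S_X$, $y_n^*\in S_{Y^*}$ with $\|Gx_n\|\to1$ and for which the limit exists.

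\emph{Inclusion $A\subseteq S_G(T)$.} Let $\lambda=\lim_n y_n^*(Tx_n)$ with $\|Gx_n\|\to 1$, and fix $\delta>0$. There is $N$ such that $\|Gx_n\|>1-\delta$ for all $n\ge N$. Hence $y_n^*(Tx_n)\in E_\delta(T)$ for $n\ge N$, and so $\lambda\in\overline{E_\delta(T)}$. Since $\delta$ was arbitrary, $\lambda\in\bigcap_{\delta>0}\overline{E_\delta(T)}=S_G(T)$.

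\emph{Inclusion $S_G(T)\subseteq A$.} This is a diagonal argument. Let $\lambda\in S_G(T)$. For each $n\in\mathbb{N}$ we have $\lambda\in\overline{E_{1/n}(T)}$. So we can choose $x_n\in S_X$ and $y_n^*\in S_{Y^*}$ with $\|Gx_n\|>1-1/n$ and $|y_n^*(Tx_n)-\lambda|<1/n$. Since $\|G\|=1$, we get $1-1/n<\|Gx_n\|\le 1$, so $\|Gx_n\|\to1$. Also $y_n^*(Tx_n)\to\lambda$, hence $\lambda\in A$.

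For (ii), the set $\widetilde{E}_\delta(T)$ does not depend on $y^*$, so the $y^*$ appearing in the definition of $\widetilde S_G(T)$ is a dummy variable. The same two inclusions then go through verbatim with $\|Tx_n\|$ in place of $y_n^*(Tx_n)$.

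There is no genuine obstacle. The only points needing care are these:
\begin{itemize}
\item The nested closures must be nonempty. They are: the sets $E_\delta(T)$ are nonempty because $\|G\|=1$ guarantees points $x\in S_X$ with $\|Gx\|>1-\delta$.
\item The bound $\|Gx_n\|\le\|G\|=1$ is what turns $\|Gx_n\|>1-1/n$ into convergence $\|Gx_n\|\to 1$.
\item The right-hand sides must be read as sets of limits of convergent sequences only.
\end{itemize}
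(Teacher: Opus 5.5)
Your proof is correct and follows essentially the same route as the paper: you get $A\subseteq S_G(T)$ by fixing $\delta$ and passing to the tail of the sequence, and the reverse inclusion by choosing $(x_n,y_n^*)$ from $E_{1/n}(T)$ within $1/n$ of $\lambda$, with (ii) handled the same way. You also make explicit that $\|Gx_n\|\le\|G\|=1$ is what turns $\|Gx_n\|>1-1/n$ into $\|Gx_n\|\to 1$, a step the paper leaves implicit.
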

	
	\begin{proof}
		We only prove $(i)$ and the proof of $(ii)$ can be easily done in a similar way. Let $A:=\Big\{\lim_{n} y_n^*(Tx_n):x_n\in S_X,\ y_n^*\in S_{Y^*},\ \|Gx_n\|\to 1 \Big\}.$ We first prove that $A\subset S_G(T)$. Let $t\in A$. Then there exist sequences $x_n\in S_X$ and 
		$y_n^*\in S_{Y^*}$ such that
		\[
		\|Gx_n\|\to 1\quad\text{and}\quad y_n^*(Tx_n)\to t.
		\]
		Fix $\delta>0$. Since $\|Gx_n\|\to 1$, there exists $N$ such that $\|Gx_n\|>1-\delta$ for all $n\ge N.$ Hence for $n\ge N$,
		\[
		y_n^*(Tx_n)\in \{y^*(Tx):y^*\in S_{Y^*},\,x\in S_X,\,\|Gx\|>1-\delta\}.
		\]
		Since $y_n^*(Tx_n)\to t$, it follows that $t\in \overline{\{y^*(Tx):y^*\in S_{Y^*},\,x\in S_X,\,\|Gx\|>1-\delta\}}.$ Because this holds for every $\delta>0$, we conclude that $t\in S_G(T)$ and consequently, $A\subset S_G(T)$. We next prove that $S_G(T)\subset A$. Let $t\in S_G(T)$. Then for every $\delta>0$,
		\[
		t\in
		\overline{\{y^*(Tx):y^*\in S_{Y^*},\,x\in S_X,\,\|Gx\|>1-\delta\}}.
		\]
		Take $\delta_n=1/n$. For each $n$ there exist $x_n\in S_X$ and
		$y_n^*\in S_{Y^*}$ such that $\|Gx_n\|>1-\frac1n$ and $|y_n^*(Tx_n)-t|<\frac1n.$ Therefore, $y_n^*(Tx_n)\to t$ and $\|Gx_n\|\to 1.$ Thus, $t\in A$ and consequently, $S_G(T)\subset A$. Since both inclusions hold, we obtain
		\[
		S_G(T)=
		\Big\{\lim_{n} y_n^*(Tx_n):
		x_n\in S_X,\ y_n^*\in S_{Y^*},\ \|Gx_n\|\to 1 \Big\}.
		\]
	\end{proof}

	\begin{remark}\label{R2.4}
		Analogous to Proposition \ref{P2.2}, one can obtain an alternative description of $V_G(T)$ as following:
		\[
		V_G(T)=\Big\{\lim_{n} y_n^*(Tx_n):x_n\in S_X, y_n^*\in S_{Y^*}, \Re y_n^*(Gx_n)\to 1 \Big\}.
		\]
        It is worth mentioning that various expressions of $V_G(T)$ can be seen in \cite{MMQRS}.
	\end{remark}
	
	It is easy to see that in finite-dimensional setting, the expressions of $S_G(T)$ and $V_G(T)$ obtained in Proposition \ref{P2.2} and Remark \ref{R2.4} respectively, reduce significantly, we omit the proof as it is straightforward:

	\begin{proposition}\label{P2.5}
		Let $X, Y$ be finite-dimensional Banach spaces. Assume that $G\in S_{L(X,Y)}$ and let $T\in L(X,Y)$. Then
		\begin{itemize}
			\item[(i)] $S_G(T)=\Big\{ y^*(Tx):x\in S_X, y^*\in S_{Y^*},\ \|Gx\|= 1 \Big\}.$
			\item[(ii)] $V_G(T)=\Big\{y^*(Tx):x\in S_X, y^*\in S_{Y^*}, y^*(Gx)= 1 \Big\}.$
		\end{itemize}
	\end{proposition}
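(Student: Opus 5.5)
The plan is to start from the sequential descriptions in Proposition \ref{P2.2}(i) and Remark \ref{R2.4}, and then use compactness of the unit spheres, which holds in finite dimensions.

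For (i), one inclusion is immediate. If $x\in S_X$ with $\|Gx\|=1$ and $y^*\in S_{Y^*}$, take the constant sequences $x_n=x$ and $y_n^*=y^*$. Then $\|Gx_n\|\to 1$, so $y^*(Tx)\in S_G(T)$ by Proposition \ref{P2.2}(i).

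For the reverse inclusion, let $t=\lim_n y_n^*(Tx_n)$ with $x_n\in S_X$, $y_n^*\in S_{Y^*}$ and $\|Gx_n\|\to 1$.
\begin{itemize}
\item Since $X$ and $Y^*$ are finite-dimensional, $S_X$ and $S_{Y^*}$ are compact.
\item Pass to a subsequence with $x_{n_k}\to x\in S_X$ and $y_{n_k}^*\to y^*\in S_{Y^*}$.
\item Continuity of $G$ and of the norm gives $\|Gx\|=\lim_k\|Gx_{n_k}\|=1$.
\item Joint continuity of the pairing $(y^*,z)\mapsto y^*(z)$, together with continuity of $T$, gives $y_{n_k}^*(Tx_{n_k})\to y^*(Tx)$. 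Hence $t=y^*(Tx)$.
\end{itemize}
The needed estimate is
\[
|y_{n_k}^*(Tx_{n_k})-y^*(Tx)|\le \|T\|\,\|x_{n_k}-x\|+\|y_{n_k}^*-y^*\|\,\|T\|,
\]
using $\|y_{n_k}^*\|=1$ and $\|x\|=1$. So $t$ lies in the set on the right-hand side of (i).

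For (ii), the same argument applies to the description in Remark \ref{R2.4}. Take sequences with $\Re y_n^*(Gx_n)\to 1$ and extract a convergent subsequence as above. In the limit, $\Re y^*(Gx)=1$. Since $|y^*(Gx)|\le\|G\|=1$, the real part equal to $1$ forces $y^*(Gx)=1$. Conversely, if $y^*(Gx)=1$, the constant sequences show $y^*(Tx)\in V_G(T)$.

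The only point needing care is this step in (ii): upgrading $\Re y^*(Gx)=1$ to $y^*(Gx)=1$, which is a consequence of $\|G\|=1$. The compactness extraction is routine, because the unit sphere of the finite-dimensional dual $Y^*$ is norm-compact. This also ensures the limit functional still has norm one.
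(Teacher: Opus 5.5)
Your argument is correct and is the route the paper intends: the paper omits this proof as straightforward and presents the statement as a reduction of the sequential descriptions in Proposition \ref{P2.2}(i) and Remark \ref{R2.4}, which is what you carry out using compactness of $S_X$ and $S_{Y^*}$. The one detail worth recording is the step in (ii) that upgrades $\Re y^*(Gx)=1$ to $y^*(Gx)=1$ via $|y^*(Gx)|\le\|G\|=1$, and you handle it properly.
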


    We end this section by presenting the Bipolar Theorem which will be used in the discussion of the geometry of $(L(X, Y), \|.\|_G)^*$. For $A \subset X$ and a subspace $\mathcal{X} \subset X^*$, we define the \emph{polar} of $A$ by $A^\circ = \{ f \in \mathcal{X} : |f(x)| \le 1,\, \text{for all}\, x \in A \}.$ Then define the \emph{bipolar} by 
    \[
    A^{\circ\circ} = \{ x \in X : |f(x)| \le 1,\,\text{for all}\, f \in A^\circ \}.
    \]
    Also, let $\sigma(X,\mathcal{X})$ denote the weak topology on $X$ generated by $\mathcal{X}$.

    \begin{theorem}\cite[1.5, Theorem]{SchaeferWolff1999}
    Let $A \subset X$. Then $A^{\circ\circ} = \overline{\operatorname{co}}^{\sigma(X,\mathcal{X})}(A \cup \{\theta\}).$
    \end{theorem}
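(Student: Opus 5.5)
The plan is to prove the two inclusions $A^{\circ\circ}\supseteq \overline{\operatorname{co}}^{\sigma(X,\mathcal{X})}(A\cup\{\theta\})$ and $A^{\circ\circ}\subseteq \overline{\operatorname{co}}^{\sigma(X,\mathcal{X})}(A\cup\{\theta\})$ separately. Throughout, $\mathcal{X}$ is taken to separate the points of $X$ (as for $\mathcal{X}=X^*$ by Hahn--Banach), so that $(X,\sigma(X,\mathcal{X}))$ is a Hausdorff locally convex space whose continuous dual is exactly $\mathcal{X}$. This identification of the dual is the standard fact I will use.

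For the easy inclusion, I observe that $A^{\circ\circ}=\bigcap_{f\in A^\circ}\{x:|f(x)|\le 1\}$. Each set in this intersection is convex, contains $\theta$, and is $\sigma(X,\mathcal{X})$-closed, because $f\in\mathcal{X}$ is $\sigma(X,\mathcal{X})$-continuous. Hence $A^{\circ\circ}$ is a $\sigma(X,\mathcal{X})$-closed convex set containing $A\cup\{\theta\}$, and it therefore contains the closed convex hull $C:=\overline{\operatorname{co}}^{\sigma(X,\mathcal{X})}(A\cup\{\theta\})$.

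For the reverse inclusion, I argue by contradiction. Suppose $x_0\in A^{\circ\circ}\setminus C$. The set $C$ is closed and convex in the locally convex space $(X,\sigma(X,\mathcal{X}))$, so the Hahn--Banach separation theorem gives a $\sigma(X,\mathcal{X})$-continuous linear functional $g$, hence some $g\in\mathcal{X}$, and a real $\alpha$ with
\[
\Re g(x)\le\alpha<\Re g(x_0)\quad\text{for all }x\in C.
\]
Since $\theta\in C$, we get $\alpha\ge 0$. I will then perturb $\alpha$ to be strictly positive (replacing $\alpha$ by $\alpha+\varepsilon$ if $\alpha=0$, with $\varepsilon$ small enough that $\alpha+\varepsilon<\Re g(x_0)$ still holds), and set $f=g/\alpha$. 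Then $\Re f\le 1$ on $C$.

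The main obstacle is upgrading the bound $\Re f\le 1$ to $|f|\le 1$ on $A$, since the definition of the polar uses absolute values and $C$ need not be balanced. My first attempt is to check whether the paper's convention intends $A$ to be balanced, or the real polar $\Re f\le 1$. Otherwise I pass to the balanced hull. I will show that the bipolar of $A$ equals that of its balanced hull $\mathbb{K}_{|\cdot|\le1}A$, which holds because $|f(\lambda x)|\le|f(x)|$ for $|\lambda|\le 1$. The theorem as stated with $|\cdot|$ then holds with $A$ replaced by its balanced hull, and I would state it in that corrected form. For balanced $C$, we have $\Re f(\lambda x)\le 1$ for all $|\lambda|=1$, which gives $|f(x)|\le1$. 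Then $f\in A^\circ$, but $|f(x_0)|\ge\Re f(x_0)>1$, contradicting $x_0\in A^{\circ\circ}$. So in the balanced form the reverse inclusion, and hence the equality, follows.
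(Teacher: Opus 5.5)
Your argument is correct. It is the standard Hahn--Banach separation proof in $(X,\sigma(X,\mathcal{X}))$, using $\theta\in C$ to normalize the separating constant to $1$. The paper gives no proof of its own, only the citation to Schaefer--Wolff, where the theorem is proved by exactly this argument.

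You can drop the hedging about the obstacle: with the paper's absolute-value polar, the statement as written is false for non-balanced $A$. For example, take $X=\mathbb{R}$, $\mathcal{X}=X^*$ and $A=\{1\}$. Then $A^\circ=[-1,1]$ and $A^{\circ\circ}=[-1,1]$, whereas $\overline{\operatorname{co}}(A\cup\{\theta\})=[0,1]$.

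The formulation with $A\cup\{\theta\}$ belongs to the one-sided polar $\{f\in\mathcal{X}:\Re f(x)\le 1 \text{ for all } x\in A\}$, which is the convention of the cited source. For that polar your argument works verbatim, with no balancing step. With absolute values, the correct conclusion is your version $A^{\circ\circ}=\overline{\operatorname{co}}^{\sigma(X,\mathcal{X})}(\operatorname{bal}(A)\cup\{\theta\})$. Keep the $\theta$ so that $A=\emptyset$ is covered, and say explicitly that the $\sigma(X,\mathcal{X})$-closed convex hull of a balanced set is balanced, since your final step uses this.

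This corrected form is all that Theorem~\ref{Th1} needs. The set $C$ there is invariant under multiplication by every $\lambda\in S^1$ (replace $y_i^*$ by $\lambda y_i^*$). Together with $\theta\in\operatorname{co}(C)$, this makes $\operatorname{co}(C)$ already balanced. The paper only records invariance under $-1$, which suffices only over $\mathbb{R}$.

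Finally, your assumption that $\mathcal{X}$ separates points is unnecessary. The continuous dual of $(X,\sigma(X,\mathcal{X}))$ is $\mathcal{X}$, and strict separation of a point from a closed convex set holds in any locally convex space, Hausdorff or not.
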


	With these preliminary results in hand, we now present the main results of this article.
	
	\section{Results}
	\medskip
	
	We first obtain a characterization of relative spear operator in the same spirit as that of Ardalani obtained a characterization for spear operators. In other words, we provide a necessary and sufficient conditions for $n^{(2)}_{G}(X,Y)=1$.

	\begin{theorem}
		Let $G\in L(X, Y )$ with $\|G\|=1$. Then for $T\in L(X, Y )$, the following two conditions are equivalent:
		\begin{itemize}
			\item[(i)] $\max_{\omega\in S^1}\|G+\omega T\|_G = 1+\|T\|_G.$
			\item[(ii)] $\nu_G(T)=\|T\|_G$, i.e., $n^{(2)}_{G}(X,Y)=1$.
		\end{itemize}
	\end{theorem}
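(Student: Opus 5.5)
The plan is to prove both implications directly from the sequential descriptions of $\|\cdot\|_G$ and $V_G(\cdot)$ in Proposition \ref{DP1}, Proposition \ref{P2.2}(ii) and Remark \ref{R2.4}.

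\textbf{Preliminary facts.} First I would record two facts.
\begin{itemize}
\item[(a)] $\|G\|_G=1$. Indeed, $\sup\{\|Gx\|: x\in S_X,\ \|Gx\|>1-\delta\}$ lies in $[1-\delta,1]$ for every $\delta>0$.
\item[(b)] If $x_n\in S_X$ and $\|Gx_n\|\to 1$, then $\limsup_n\|Sx_n\|\le \|S\|_G$ for every $S\in L(X,Y)$. This holds because, for each $\delta>0$, eventually $\|Gx_n\|>1-\delta$, so $\|Sx_n\|\le \sup\{\|Sx\|: x\in S_X,\ \|Gx\|>1-\delta\}$; now take the infimum over $\delta$.
\end{itemize}
Since $\|\cdot\|_G$ is a seminorm, (a) gives $\|G+\omega T\|_G\le 1+\|T\|_G$ for every $\omega\in S^1$. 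Moreover $\omega\mapsto \|G+\omega T\|_G$ is continuous (it is $\|T\|_G$-Lipschitz in $\omega$), so the maximum over the compact set $S^1$ exists. Hence (i) amounts to finding a single $\omega$ with $\|G+\omega T\|_G\ge 1+\|T\|_G$.

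\textbf{(ii)$\Rightarrow$(i).} Pick $\lambda\in V_G(T)$ with $|\lambda|=\nu_G(T)=\|T\|_G$. By Remark \ref{R2.4} there are $x_n\in S_X$ and $y_n^*\in S_{Y^*}$ with $\Re y_n^*(Gx_n)\to1$ and $y_n^*(Tx_n)\to\lambda$. Since $1\ge\|Gx_n\|\ge \Re y_n^*(Gx_n)$, we get $\|Gx_n\|\to1$. Choose $\omega\in S^1$ with $\omega\lambda=|\lambda|$. Then
\[
\|(G+\omega T)x_n\|\ge \Re y_n^*\bigl((G+\omega T)x_n\bigr)\to 1+|\lambda|.
\]
Passing to a convergent subsequence of the bounded sequence $\|(G+\omega T)x_n\|$, Proposition \ref{P2.2}(ii) shows that its limit, which is at least $1+\|T\|_G$, lies in $\widetilde S_G(G+\omega T)$. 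Proposition \ref{DP1} then gives $\|G+\omega T\|_G\ge 1+\|T\|_G$.

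\textbf{(i)$\Rightarrow$(ii).} Since $\nu_G(T)\le\|T\|_G$ is already known, it suffices to show $\nu_G(T)\ge\|T\|_G$. Let $\omega$ attain the maximum in (i). By Propositions \ref{DP1} and \ref{P2.2}(ii), there are $x_n\in S_X$ with $\|Gx_n\|\to1$ and $\|(G+\omega T)x_n\|\to 1+\|T\|_G$. Take $y_n^*\in J_{\|.\|}((G+\omega T)x_n)$, so that
\[
y_n^*(Gx_n)+\omega\, y_n^*(Tx_n)\to 1+\|T\|_G .
\]
Pass to a subsequence along which $y_n^*(Gx_n)\to a$ and $\omega y_n^*(Tx_n)\to b$; this is possible because both sequences are bounded in $\mathbb{K}$. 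Then $|a|\le1$, and by fact (b), $|b|\le\limsup_n\|Tx_n\|\le \|T\|_G$. From $a+b=1+\|T\|_G$ we get
\[
1+\|T\|_G=\Re a+\Re b\le |a|+|b|\le 1+\|T\|_G,
\]
so equality holds throughout, forcing $a=1$ and $b=\|T\|_G$. Thus $\Re y_n^*(Gx_n)\to1$ and $y_n^*(Tx_n)\to\bar\omega\|T\|_G$. By Remark \ref{R2.4}, $\bar\omega\|T\|_G\in V_G(T)$, hence $\nu_G(T)\ge\|T\|_G$.

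The step I expect to be the crux is fact (b). It is what keeps the $T$-part of the norming sequence from exceeding $\|T\|_G$; without it the equality argument in $\mathbb{K}$ collapses. Some care is also needed to extract subsequences so that all the limits exist.
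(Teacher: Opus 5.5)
Your proof is correct and follows the paper's route in both directions. You split $y_n^*((G+\omega T)x_n)$ into a $G$-part of modulus at most $1$ and a $T$-part whose $\limsup$ is at most $\|T\|_G$, and use the extremal equality to land in $V_G(T)$, respectively in the (sequential) description of $\|G+\omega T\|_G$. The only difference is cosmetic: in (i)$\Rightarrow$(ii) you work with $\widetilde S_G$ and take support functionals at $(G+\omega T)x_n$, so that $a=1$ and $b=\|T\|_G$ follow directly, which avoids the paper's rotation $z_k^*=e^{-i\arg(a_{n_k})}y_{n_k}^*$.
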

	
	\begin{proof}
		$(i)\implies (ii).$ Since $V_G(T)\subset S_G(T)$, it follows that $\nu_G(T)\le \|T\|_G.$ Choose $\omega\in S^1$ such that $\|G+\omega T\|_G=1+\|T\|_G$. By Proposition \ref{P2.2}, there exist $x_n\in S_X$ and $y_n^*\in S_{Y^*}$ such that
		\[
		\|Gx_n\|\to 1\qquad\text{and}\qquad y_n^*((G+\omega T)x_n)\to t,
		\]
		where $|t|=1+\|T\|_G$. Set $a_n:=y_n^*(Gx_n)$ and $b_n:=y_n^*(Tx_n).$ Then $a_n+\omega b_n\to t$. Since $|a_n|\le \|Gx_n\|\to1$, we have
		$\limsup |a_n|\le 1$. Moreover, by the definition of $\|\cdot\|_G$,
		$\limsup |b_n|\le \|T\|_G$. It follows that 
		\[
		1+\|T\|_G = |t| \le \limsup(|a_n|+|b_n|) \le \limsup|a_n|+\limsup|b_n| \le 1+\|T\|_G.
		\]
		Hence $\limsup(|a_n|+|b_n|)=1+\|T\|_G$, which implies $\limsup|a_n|=1$ and $\limsup|b_n|=\|T\|_G$. Passing to a subsequence $(n_k)$ we may assume $|a_{n_k}|\to 1$ and $|b_{n_k}|\to\|T\|_G.$ Define $z_k^*=e^{-i\arg(a_{n_k})}y_{n_k}^*$. Then $\|z_k^*\|=1$ and
		\[
		\Re z_k^*(Gx_{n_k})=|a_{n_k}|\to1, \qquad |z_k^*(Tx_{n_k})|=|b_{n_k}|\to\|T\|_G.
		\]
		By Remark \ref{R2.4}, any limit point $t_0$ of $z_k^*(Tx_{n_k})$ belongs to $V_G(T)$. Since $|z_k^*(Tx_{n_k})|\to\|T\|_G$, we obtain $|t_0|=\|T\|_G$, and therefore $\nu_G(T)\ge \|T\|_G$. Combining this with $\nu_G(T)\le \|T\|_G$ gives $\nu_G(T)=\|T\|_G.$ This completes the proof.

		\medskip

		$(ii)\implies (i).$ For any $\omega\in S^1$,
		\[
		\|G + \omega T\|_G \le \|G\|_G + \|\omega T\|_G \le \|G\| + \|\omega T\|_G = 1 + \|T\|_G.
		\]
		Therefore, $\max_{\omega \in S^1} \|G + \omega T\|_G \leq 1 + \|T\|_G.$ We next prove the reverse inequality. Suppose $\nu_G(T) = \|T\|_G$. Then there exists $\lambda \in V_G(T)$ with $|\lambda| = \|T\|_G$. By the definition of $V_G(T)$, taking $\delta_n=1/n$ we obtain sequences $x_n\in S_X$ and
		$y_n^*\in S_{Y^*}$ satisfying
		\[
		\Re y_n^*(Gx_n)>1-\tfrac1n,\qquad |y_n^*(Tx_n)-\lambda|<\tfrac1n.
		\]
		In other words, $\Re y_n^*(Gx_n) \to 1$ and $y_n^*(Tx_n) \to \lambda.$ Since $|y_n^*(Gx_n)|\le \|Gx_n\|\le 1$, the relations $|z_n|\le1$ and $\Re z_n\to 1$ imply $z_n\to1$. Thus, $y_n^*(Gx_n)\to 1$ and $\|Gx_n\|\to 1$. Take $\omega_0\in S^1$ such that $|\lambda|=\omega_0\lambda$. Then
		\[
		y_n^*((G + \omega_0 T)x_n) = y_n^*(Gx_n) + \omega_0 y_n^*(Tx_n) \to 1 + \omega_0\lambda = 1 + |\lambda| = 1 + \|T\|_G.
		\]
		Fix $\delta > 0$. Given any $\varepsilon>0$, since $\|Gx_n\| \to 1$, there exists $N$ such that $\|Gx_n\| > 1 - \delta$ and $|y_n^*((G + \omega_0 T)x_n) - (1 + \|T\|_G)| < \varepsilon$, for all $n \ge N$. Thus,  
		\[
		1 + \|T\|_G \in \overline{\{ y^*((G + \omega_0 T)x) : y^* \in S_{Y^*}, x \in S_X, \|Gx\| > 1 - \delta \}}.
		\]
		As this holds for every $\delta > 0$, we obtain
		\[
		1 + \|T\|_G \in \bigcap_{\delta > 0} \overline{\{ y^*((G + \omega_0 T)x) : y^* \in S_{Y^*}, x \in S_X, \|Gx\| > 1 - \delta \}} = S_G(G + \omega_0 T).
		\]
		Therefore, $\max_{\omega \in S^1} \|G + \omega T\|_G\geq \|G + \omega_0 T\|_G \ge 1 + \|T\|_G$. Combining the two inequalities yields $\max_{\omega \in S^1} \|G + \omega T\|_G = 1 + \|T\|_G$. This completes the proof.
	\end{proof}

    It is easy to see from the definition of relative spear operator that every spear operator is a relative spear operator. In the following example, we have shown that there exists a relative spear operator which is not a spear operator.

    \begin{example}
        Let $X = Y = \ell_1^2$ (over $\mathbb{R}$ or $\mathbb{C}$). Then $Y^* = \ell_\infty^2$. Define $G \in L(X,Y)$ by $G(x_1,x_2) = (x_1,0).$ Clearly $\|G\|=1$. Let $T \in L(X,Y)$ be arbitrary, and write
        \[
        T = \begin{pmatrix} 
        a & b \\ c & d 
        \end{pmatrix}, \quad \text{i.e.,}\quad T(x_1,x_2) = (a x_1 + b x_2, c x_1 + d x_2).
        \]
        We first compute $\|T\|_G$. Let $x:=(x_1, x_2)$. The condition $\|Gx\| = |x_1| = 1$, together with $|x_1|+|x_2|=1$, implies $x_2=0$ and $|x_1|=1$. Thus, $x=(x_1,0)$ with $|x_1|=1$. Then
        \[
        Tx = (ax_1,\; cx_1), \quad \|Tx\|_1 = |a| + |c|.
        \]
        Therefore, $\|T\|_G = |a| + |c|.$ We next compute $\nu_G(T)$. Let $x\in S_X$ and $y^*:=(y_1,y_2)\in S_{Y^*}$. The condition $y^*(Gx)=1$ becomes $y_1 x_1 = 1.$ Since $|y_1|\le1$ and $|x_1|\le1$, this forces $|x_1|=|y_1|=1$ and $y_1=\overline{x_1}$. Therefore, $x_2=0$ and $x=(x_1,0)$ with $|x_1|=1$. Then $Tx = (a x_1,\; c x_1),$ and $y^*(Tx) = y_1 a x_1 + y_2 c x_1 = a + y_2 c x_1,$ where $|y_2|\le1$. Thus, 
        \[
        \nu_G(T) = \max\{|a + y_2 c x_1|: |x_1|=1,\ |y_2|\le1\}= |a| + |c|.
        \]
        Therefore, $\nu_G(T) = \|T\|_G$ for all $T \in L(X,Y).$ Hence $G$ is a relative spear operator. Consider $T \in L(X,Y)$ defined by $T(x_1,x_2) = (0,x_2).$ Then $\|T\|=1$. For any $\lambda \in S^1$, $(G+\lambda T)(x_1,x_2) = (x_1,\ \lambda x_2),$ and therefore
        \[
        \|(G+\lambda T)(x_1,x_2)\|_1 = |x_1| + |x_2| = \|(x_1,x_2)\|_1.
        \]
        It follows that $\|G+\lambda T\|=1 < 2 = 1+\|T\|$, so $G$ is not a spear operator.
        \end{example}

	We next show that this characterization is preserved under composition of surjective isometries.

	\begin{theorem}
		Let $G \in  S_{L(X, Y)}$ be a relative spear operator, and let $U \in L(Y,Z)$ be a surjective isometry. Then $U G$ is a relative spear operator.
	\end{theorem}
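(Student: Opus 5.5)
The plan is to transport everything through $U$ and check that the map $T\mapsto UT$ intertwines both quantities. Since $U$ is a surjective isometry, $U^{-1}\in L(Z,Y)$ is also an isometry. Every $S\in L(X,Z)$ can therefore be written uniquely as $S=UT$ with $T:=U^{-1}S\in L(X,Y)$. It thus suffices to prove, for every $T\in L(X,Y)$,
\[
\|UT\|_{UG}=\|T\|_G \qquad\text{and}\qquad \nu_{UG}(UT)=\nu_G(T).
\]
Once we have these, the hypothesis $\nu_G(T)=\|T\|_G$ gives $\nu_{UG}(UT)=\|UT\|_{UG}$ for all $UT$, hence for all $S\in L(X,Z)$. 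We also note at the outset that $\|UG\|=\|G\|=1$, so $UG$ is an admissible norm-one operator.

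For the first identity we work directly from the definition. Since $\|UGx\|=\|Gx\|$ and $\|UTx\|=\|Tx\|$ for all $x\in X$, the sets $\{x\in S_X:\|UGx\|>1-\delta\}$ and $\{x\in S_X:\|Gx\|>1-\delta\}$ coincide for each $\delta>0$. The suprema of $\|UTx\|$ and $\|Tx\|$ over these sets also coincide. Taking the infimum over $\delta$ gives $\|UT\|_{UG}=\|T\|_G$.

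For the second identity we use the adjoint. Since $U$ is a surjective isometry, $U^*:Z^*\to Y^*$ is a surjective isometry as well, so $z^*\mapsto U^*z^*$ is a bijection from $S_{Z^*}$ onto $S_{Y^*}$. For $x\in S_X$ and $z^*\in S_{Z^*}$ we have
\[
z^*(UGx)=(U^*z^*)(Gx)\qquad\text{and}\qquad z^*(UTx)=(U^*z^*)(Tx).
\]
It follows that, for every $\delta>0$,
\[
\{z^*(UTx): x\in S_X,\ z^*\in S_{Z^*},\ \Re z^*(UGx)>1-\delta\}=\{y^*(Tx): x\in S_X,\ y^*\in S_{Y^*},\ \Re y^*(Gx)>1-\delta\}.
\]
Taking closures and intersecting over $\delta$ yields $V_{UG}(UT)=V_G(T)$, hence $\nu_{UG}(UT)=\nu_G(T)$. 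Alternatively, one could use the sequential description in Remark \ref{R2.4}, transferring sequences $y_n^*=U^*z_n^*$ in both directions.

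The only step that needs care is the surjectivity of $U^*$ onto $S_{Y^*}$. For $y^*\in S_{Y^*}$ we take $z^*:=y^*\circ U^{-1}$; then $U^*z^*=y^*$ and $\|z^*\|=\|y^*\|=1$ because $U^{-1}$ is an isometry. This surjectivity is what guarantees equality of the two sets above rather than mere inclusion. Everything else is a direct unwinding of the definitions.
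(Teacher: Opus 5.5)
Your proof is correct, but it takes a different route from the paper.

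\textbf{The paper's route.} The paper also writes $T=UM$ with $M=U^{-1}T$ and proves $\|T\|_{UG}=\|M\|_G$. After that it never works with functionals. It observes that $\|(UG+\omega T)x\|=\|Gx+\omega Mx\|$, so $\|UG+\omega T\|_{UG}=\|G+\omega M\|_G$ for every $\omega\in S^1$. It then invokes the preceding characterization theorem ($\nu_G(T)=\|T\|_G$ if and only if $\max_{\omega\in S^1}\|G+\omega T\|_G=1+\|T\|_G$) twice:
\begin{enumerate}
\item[(a)] once to obtain the max identity for $G$ and $M$;
\item[(b)] once to convert the transported identity for $UG$ and $T$ back into $\nu_{UG}(T)=\|T\|_{UG}$.
\end{enumerate}

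\textbf{Your route.} You bypass that characterization entirely. You transport the numerical range itself through the bijection $z^*\mapsto U^*z^*$ of $S_{Z^*}$ onto $S_{Y^*}$, which gives $V_{UG}(UT)=V_G(T)$. You also correctly check that the surjectivity of this map is what gives equality of the two sets rather than mere inclusion.

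\textbf{What each buys.} The paper's argument stays at the level of norms of vectors, but it depends on the earlier characterization theorem. Yours is self-contained and proves more. Since $\|UT\|_{UG}=\|T\|_G$ and $\nu_{UG}(UT)=\nu_G(T)$ for every $T\in L(X,Y)$, you get $n^{(2)}_{UG}(X,Z)=n^{(2)}_G(X,Y)$ in general, not just preservation of the value $1$.

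In fact your argument is essentially the special case $U_1=\mathrm{Id}_X$, $U_2=U$ of the paper's later theorem on invariance of $n^{(2)}_G$ under isometric isomorphisms. That proof uses the same transfer $y^*\mapsto y^*\circ U_2^{-1}$, so the present statement could also be read off directly from that theorem.
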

	\begin{proof}
		Since $U$ is a surjective isometry, it is an isometric isomorphism with inverse $U^{-1}: Z \to Y$ that is also an isometry. For any $T \in L(X, Z)$, define $M = U^{-1} T \in  L(X, Y)$, so that $T = U M$. We first show that $\|T\|_{U G} = \|M\|_G$. For any $\delta > 0$, note that 
		\begin{align*}
			\{\|Tx\|: x \in S_X, \|U G x\| > 1 - \delta\} & = \{\|Tx\|: x \in S_X, \|G x\| > 1 - \delta\}\\
			& =\{\|U Mx\|: x \in S_X, \|G x\| > 1 - \delta\}\\
			& =\{\|Mx\|: x \in S_X, \|G x\| > 1 - \delta\}.
		\end{align*}
		Taking supremum over these sets and then the limit as $\delta \to 0$ yields $\|T\|_{U G} = \|M\|_G$. Now, for any $\omega \in S^1$, we have
		\[
		\|(U G + \omega T) x\| = \|U (G x + \omega M x)\| = \|G x + \omega M x\|
		\]
		for all $x \in X$. Consequently, $\|U G + \omega T\|_{U G} = \|G + \omega M\|_G.$ Since $G$ is a relative spear operator, $\max_{\omega\in S^1}\|G + \omega M\|_G = 1 + \|M\|_G.$ Thus,
		\[
		\max_{\omega \in S^1} \|U G + \omega T\|_{U G} = \max_{\omega \in S^1} \|G + \omega M\|_G = 1 + \|M\|_G = 1 + \|T\|_{U G}.
		\]
		This shows that $U G$ is a relative spear operator.
	\end{proof}
	
	We now state a simple observation describing the relationship among the numerical indices $n_G(X, Y)$, $n^{(1)}_G(X, Y)$, and $n^{(2)}_G(X, Y)$.
	
	\begin{lemma}\label{L3}
		Let $G\in S_{ L(X, Y )}$. Then $n_G^{(1)}(X,Y)\, n_G^{(2)}(X,Y)\le n_G(X,Y).$
	\end{lemma}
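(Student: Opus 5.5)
The plan is to chain the two defining inequalities, using the ``max'' characterizations of $n_G^{(1)}$ and $n_G^{(2)}$ recorded in the introduction. Write $k_1:=n_G^{(1)}(X,Y)$ and $k_2:=n_G^{(2)}(X,Y)$. By definition, for every $T\in L(X,Y)$,
\[
k_1\|T\|\le \|T\|_G \qquad\text{and}\qquad k_2\|T\|_G\le \nu_G(T).
\]
Both constants are nonnegative. So we may multiply the first inequality by $k_2\ge 0$ and combine it with the second, which gives
\[
k_1k_2\|T\|\le k_2\|T\|_G\le \nu_G(T)\qquad (T\in L(X,Y)).
\]

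From this the claim is immediate. Taking $\|T\|=1$ and passing to the infimum over such $T$ gives $k_1k_2\le \inf\{\nu_G(T):\|T\|=1\}=n_G(X,Y)$. Equivalently, $k_1k_2$ is an admissible constant in the max description of $n_G(X,Y)$, hence it is at most that maximum.

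The only point needing care is justifying that the inf-definitions coincide with the max-descriptions, i.e. that $k_1\|T\|\le\|T\|_G$ holds for \emph{all} $T$ and not just norm-one $T$.

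\begin{itemize}
\item Homogeneity settles this: for $T\neq 0$, apply the definition to $T/\|T\|$, using that $\|\cdot\|_G$ and $\nu_G$ are positively homogeneous. The case $T=0$ is trivial.
\item For $k_2$ one likewise scales $T$ with $\|T\|_G\neq 0$. This is valid because we assume throughout that $\|\cdot\|_G$ is a norm, so $\|T\|_G=0$ forces $T=0$.
\end{itemize}

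I do not expect any genuine obstacle; the argument is a one-line composition of inequalities.
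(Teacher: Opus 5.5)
Your proof is correct and is essentially the paper's argument: chain $n_G^{(1)}(X,Y)\|T\|\le\|T\|_G$ with $n_G^{(2)}(X,Y)\|T\|_G\le\nu_G(T)$ and compare with the definition of $n_G(X,Y)$. Your homogeneity check of the inf/max equivalence fills in a step the paper takes for granted. When $\|T\|_G=0$ you do not actually need the norm assumption, since $0\le\nu_G(T)$ holds trivially.
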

	
	\begin{proof}
		By the definition of $n_G^{(1)}(X,Y)$ and $n_G^{(2)}(X,Y)$, for every
		$T\in L(X, Y )$,
		\[
		n_G^{(1)}(X,Y)\|T\|\le \|T\|_G
		\qquad\text{and}\qquad
		n_G^{(2)}(X,Y)\|T\|_G\le \nu_G(T).
		\]
		Combining these inequalities yields $n_G^{(1)}(X,Y) n_G^{(2)}(X,Y) \|T\|\le \nu_G(T)$ for all $T\in L(X, Y ).$ Since $n_G(X,Y)$ is defined as the largest constant $k\ge 0$ such that $k\|T\|\le \nu_G(T)$ for all $T\in L(X, Y )$, it follows that $n_G^{(1)}(X,Y)\, n_G^{(2)}(X,Y)\le n_G(X,Y),$ which completes the proof.
	\end{proof}
			
			
			
			
		

	On the other hand, for $G\in S_{ L(X, Y )}$ we have $n_G^{(1)}(X,Y)\ge n_G(X,Y).$ Therefore, by combining this with Lemma \ref{L3} we obtain the following consequence.
	
	\begin{proposition}
		Let $X$ and $Y$ be Banach spaces and let $G\in S_{ L(X, Y )}$. If $n_G^{(2)}(X,Y)=1$ then $n_G^{(1)}(X,Y)= n_G(X,Y).$
	\end{proposition}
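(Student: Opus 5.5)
The plan is to prove the two inequalities $n_G(X,Y)\le n_G^{(1)}(X,Y)$ and $n_G^{(1)}(X,Y)\le n_G(X,Y)$ separately. The second one will come from Lemma \ref{L3} together with the hypothesis $n_G^{(2)}(X,Y)=1$.

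For the first inequality, I use the chain $\nu_G(T)\le \|T\|_G\le \|T\|$ recorded in the introduction. I take this as given, but it is also easy to check directly. Remark \ref{R2.4} and Proposition \ref{P2.2} show that $V_G(T)\subset S_G(T)$: if $\Re y_n^*(Gx_n)\to 1$, then $1\ge\|Gx_n\|\ge \Re y_n^*(Gx_n)$, so $\|Gx_n\|\to 1$. Proposition \ref{DP1} then bounds $\nu_G(T)$ by $\|T\|_G$. The bound $\|T\|_G\le\|T\|$ is immediate from the definition.

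Now take any $T$ with $\|T\|=1$. Then
\[
n_G(X,Y)=n_G(X,Y)\|T\|\le \nu_G(T)\le \|T\|_G .
\]
Taking the infimum over all such $T$ gives $n_G(X,Y)\le n_G^{(1)}(X,Y)$. This argument does not use the hypothesis.

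For the reverse inequality, I apply Lemma \ref{L3}, which gives $n_G^{(1)}(X,Y)\,n_G^{(2)}(X,Y)\le n_G(X,Y)$. Substituting $n_G^{(2)}(X,Y)=1$ yields $n_G^{(1)}(X,Y)\le n_G(X,Y)$. Combining the two inequalities gives the claimed equality.

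There is no real obstacle. The only point needing care is that the inequality $\nu_G\le\|\cdot\|_G$ is justified, and that comes from the inclusion $V_G(T)\subset S_G(T)$ together with Proposition \ref{DP1}.
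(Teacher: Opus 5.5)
Your proof is correct and follows the paper's argument. The paper also gets $n_G(X,Y)\le n_G^{(1)}(X,Y)$ from $\nu_G(T)\le\|T\|_G$, and the reverse inequality from Lemma \ref{L3} with $n_G^{(2)}(X,Y)=1$. Your derivation of $\nu_G(T)\le\|T\|_G$ from $V_G(T)\subset S_G(T)$ and Proposition \ref{DP1} spells out a step the paper only asserts.
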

	
	Finally, we verify that the numerical index, $n_G^{(2)}(X, Y)$, is invariant under isometric isomorphisms.
	
	\begin{theorem}
		Let $X'$ and $Y'$ be Banach spaces and let $U_1: X \to X'$ and $U_2: Y \to Y'$ be isometric isomorphisms. Then $n_G^{(2)}(X,Y) = n_{U_2GU_1^{-1}}^{(2)}(X',Y').$
	\end{theorem}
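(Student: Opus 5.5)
The plan is to set $G' := U_2 G U_1^{-1} \in L(X',Y')$ and transport everything through the bijection
\[
\Phi: L(X,Y)\to L(X',Y'),\qquad \Phi(T)=U_2TU_1^{-1}.
\]
Since $U_1$ and $U_2$ are isometric isomorphisms, $\Phi$ is a linear bijection with inverse $S\mapsto U_2^{-1}SU_1$. Moreover $\|G'\|=\|G\|=1$, so $n^{(2)}_{G'}(X',Y')$ is defined. Recalling
\[
n^{(2)}_G(X,Y)=\inf\{\nu_G(T): \|T\|_G=1\},
\]
it suffices to prove two facts for every $T\in L(X,Y)$:
\[
\|\Phi(T)\|_{G'}=\|T\|_G \qquad\text{and}\qquad \nu_{G'}(\Phi(T))=\nu_G(T).
\]
Once these hold, $\Phi$ maps the set $\{T:\|T\|_G=1\}$ bijectively onto $\{S:\|S\|_{G'}=1\}$ and preserves the value being minimized, so the two infima coincide.

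\textbf{Step 1: the $G$-norms agree.} I will substitute $x'=U_1x$, which is a bijection of $S_X$ onto $S_{X'}$. Then $\|G'x'\|=\|U_2Gx\|=\|Gx\|$ and $\|\Phi(T)x'\|=\|U_2Tx\|=\|Tx\|$. Hence for every $\delta>0$,
\[
\{\|\Phi(T)x'\|: x'\in S_{X'},\ \|G'x'\|>1-\delta\}=\{\|Tx\|: x\in S_X,\ \|Gx\|>1-\delta\}.
\]
Taking the supremum and then the infimum over $\delta$ gives $\|\Phi(T)\|_{G'}=\|T\|_G$. This is the same computation as in the proof of the preceding theorem on $UG$.

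\textbf{Step 2: the numerical radii agree.} This step needs a little more care, because functionals on $Y'$ must be matched with functionals on $Y$. The map $y'^*\mapsto y'^*\circ U_2 = U_2^*y'^*$ is a bijection of $S_{Y'^*}$ onto $S_{Y^*}$, since the adjoint of an isometric isomorphism is again an isometric isomorphism. Put $y^*=U_2^*y'^*$ and $x=U_1^{-1}x'$. Then
\[
y'^*(G'x')=y^*(Gx),\qquad y'^*(\Phi(T)x')=y^*(Tx).
\]
So for every $\delta>0$ the set $\{y'^*(\Phi(T)x'): \Re y'^*(G'x')>1-\delta\}$ equals the corresponding set for $(G,T)$. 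Their closures and the intersections over $\delta$ therefore coincide, which gives $V_{G'}(\Phi(T))=V_G(T)$ and hence $\nu_{G'}(\Phi(T))=\nu_G(T)$. Alternatively, one can use the functional description $\nu_G(T)=\sup\{|\phi(T)|:\|\phi\|=\phi(G)=1\}$ together with the fact that $\Phi$ is an operator-norm isometry with $\Phi(G)=G'$. I will use the direct set-equality argument, since it avoids the dual of $L(X,Y)$.

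\textbf{Step 3: conclude.} With Steps 1 and 2 in hand, the equality of the infima follows from the bijection noted above. Equivalently, in the form $\max\{k\ge 0: k\|T\|_G\le\nu_G(T)\}$, the admissible constants $k$ are the same for both spaces.

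The only point needing care is the identification of dual unit spheres via $U_2^*$ in Step 2. I expect no real obstacle.
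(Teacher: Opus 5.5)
Your proposal is correct and follows essentially the same route as the paper. You set $G'=U_2GU_1^{-1}$ and $T'=U_2TU_1^{-1}$, transport unit vectors via $U_1$ and unit functionals via the adjoint of $U_2$ (the paper uses the inverse bijection $y^*\mapsto y^*\circ U_2^{-1}$), show $\|T'\|_{G'}=\|T\|_G$ and $\nu_{G'}(T')=\nu_G(T)$, and compare the infima over the corresponding unit spheres. The only cosmetic difference is that you match the sets defining $V_G$ directly, whereas the paper matches the $\inf_{\delta}\sup$ expressions for $\nu_G$.
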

	
	\begin{proof}
		Set $G' = U_2GU_1^{-1} \in L(X',Y')$. For any operator $T \in L(X,Y)$ define $T' = U_2TU_1^{-1} \in L(X',Y')$. Since $U_1$ and $U_2$ are isometric isomorphisms, the maps
		\[
		x \mapsto x' = U_1x \quad\text{and}\quad y^* \mapsto y^{*'} = y^* \circ U_2^{-1}
		\]
		are bijections from $S_X$ onto $S_{X'}$ and from $S_{Y^*}$ onto $S_{(Y')^*}$, respectively, preserving norms. Moreover,
		\[
		\norm{G'x'} = \norm{U_2GU_1^{-1}U_1x} = \norm{U_2Gx} = \norm{Gx},
		\]
		\[
		\norm{T'x'} = \norm{U_2TU_1^{-1}U_1x} = \norm{U_2Tx} = \norm{Tx},
		\]
		and for any $x \in S_X,\ y^* \in S_{Y^*}$,
		\[
		y^{*'}(G'x') = y^*(U_2^{-1}U_2GU_1^{-1}U_1x) = y^*(Gx),\quad
		y^{*'}(T'x') = y^*(U_2^{-1}U_2TU_1^{-1}U_1x) = y^*(Tx).
		\]
		For a fixed $\delta > 0$ we have
		\[
		\sup\{\norm{Tx}: x\in S_X, \norm{Gx}>1-\delta\}= \sup\{\norm{T'x'}: x'\in S_{X'}, \norm{G'x'}>1-\delta\},
		\]
		because the condition $\norm{Gx}>1-\delta$ is equivalent to $\norm{G'x'}>1-\delta$ under the bijection $x' = U_1x$, and $\norm{Tx} = \norm{T'x'}$. Taking the infimum over $\delta>0$ yields $\norm{T}_G = \norm{T'}_{G'}$. Similarly, for a fixed $\delta > 0$,
		\[
		\sup\Bigl\{ |y^*(Tx)| : \substack{x\in S_X, y^*\in S_{Y^*}, \\ \Re y^*(Gx)>1-\delta} \Bigr\} = \sup\Bigl\{ |y^{*'}(T'x')| : \substack{x'\in S_{X'}, y^{*'}\in S_{(Y')^*}, \\ \Re y^{*'}(G'x')>1-\delta} \Bigr\},
		\]
		since the bijections $x \leftrightarrow x'$ and $y^* \leftrightarrow y^{*'}$ preserve the condition $\Re y^*(Gx)>1-\delta$ and the value $|y^*(Tx)| = |y^{*'}(T'x')|$. Taking the infimum over $\delta>0$ gives $\nu_G(T) = \nu_{G'}(T')$. The correspondence $T \leftrightarrow T'$ is a bijection between operators in $L(X, Y)$ with $\norm{T}_G = 1$ and operators in $L(X', Y')$ with $\norm{T'}_{G'} = 1$. Therefore
		\[
		n_G^{(2)}(X,Y) = \inf_{\norm{T}_G=1} \nu_G(T) = \inf_{\norm{T'}_{G'}=1} \nu_{G'}(T') = n_{G'}^{(2)}(X', Y').
		\]
		This completes the proof.
	\end{proof}

	\subsection{Dual structure and smoothness of the $G$-norm}
	
	We now analyze the geometric structure of $(L(X,Y),\|\cdot\|_G)$, focusing on its dual space and smoothness properties. We denote the norm of $( L(X, Y), \|\cdot\|_G)^*$ as $\|.\|_{G, *}$. Before going to the result, we present the following definition. For $x \in X$ and $y^* \in Y^*$, define the functional $\psi_{x,y^*}:=y^* \otimes x \in L(X,Y)^*$ by
    \[
    \psi_{x,y^*}(S)=[y^* \otimes x](S) = y^*(Sx), \qquad S \in L(X,Y).
    \]
    For more study on smoothness we refer to the reader articles \cite{Kadets1, SR, RoySain, Sain} and the references therein.

    \begin{lemma}\label{dual:lem1}
        Let $G \in L(X,Y)$ with $\|G\|=1$, and $T \in L(X,Y)$. Define
        \[
        C :=\Bigl\{ \psi \in B_{L(X,Y)^*} : \exists\ \text{a net}\ (x_\alpha, y_\alpha^*) \subset S_X \times S_{Y^*},\ \|Gx_\alpha\|\to 1,\ \psi_{x_\alpha, y_\alpha^*} \xrightarrow{w^*} \psi\Bigr\}.
        \]
        Then
        \[
        \sup \Bigl\{ |\lambda| : \exists\ \text{a net}\ (x_\alpha, y_\alpha^*) \subset S_X \times S_{Y^*},\ \|Gx_\alpha\| \to 1,\ y_\alpha^*(Tx_\alpha)\to\lambda \Bigr\} = \sup_{\psi \in C} |\psi(T)|.
        \]
    \end{lemma}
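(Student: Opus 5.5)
We prove the equality by establishing the two inequalities separately. Denote the left-hand side by $L$ and the right-hand side by $R$. Throughout, note that $\psi_{x,y^*}\in B_{L(X,Y)^*}$ whenever $x\in S_X$ and $y^*\in S_{Y^*}$, because $|y^*(Sx)|\le\|S\|$. Note also that $B_{L(X,Y)^*}$ is weak$^*$ compact by Banach--Alaoglu. These two facts underlie both directions.

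\textbf{Step 1: $L\le R$.} Fix $\lambda$ admitted on the left, realized by a net $(x_\alpha,y_\alpha^*)$ with $\|Gx_\alpha\|\to1$ and $y_\alpha^*(Tx_\alpha)\to\lambda$.
\begin{itemize}
\item The net $(\psi_{x_\alpha,y_\alpha^*})$ lies in $B_{L(X,Y)^*}$, so by Banach--Alaoglu it has a weak$^*$ convergent subnet $\psi_{x_\beta,y_\beta^*}\xrightarrow{w^*}\psi$.
\item Along this subnet we still have $\|Gx_\beta\|\to1$, since a subnet of a convergent net converges to the same limit. Hence $\psi\in C$.
\item Weak$^*$ convergence evaluated at $T$ gives $\psi(T)=\lim_\beta y_\beta^*(Tx_\beta)=\lambda$, again because subnets preserve limits.
\end{itemize}
Thus $|\lambda|=|\psi(T)|\le R$. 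Taking the supremum over admissible $\lambda$ yields $L\le R$.

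\textbf{Step 2: $R\le L$.} Fix $\psi\in C$ with defining net $(x_\alpha,y_\alpha^*)$. Weak$^*$ convergence evaluated at the single element $T$ gives $y_\alpha^*(Tx_\alpha)=\psi_{x_\alpha,y_\alpha^*}(T)\to\psi(T)$. This net also satisfies $\|Gx_\alpha\|\to1$, so $\lambda=\psi(T)$ is admissible on the left. Therefore $|\psi(T)|\le L$, and taking the supremum over $\psi\in C$ gives $R\le L$.

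Both directions are essentially immediate. The only point needing care is Step 1: the witnessing net need not itself converge weak$^*$, so we must pass to a subnet. Two facts are used there. First, Banach--Alaoglu supplies a convergent subnet, which is why the statement is phrased with nets rather than sequences, since $B_{L(X,Y)^*}$ need not be weak$^*$ sequentially compact. Second, both scalar limits, namely $\|Gx_\alpha\|\to1$ and $y_\alpha^*(Tx_\alpha)\to\lambda$, survive passage to that subnet. Finally, one may remark that both suprema equal $\|T\|_G$, using Proposition~\ref{P2.2} together with the fact that nets can be replaced by sequences via the $\delta=1/n$ argument of Proposition~\ref{DP1}. This identification is not needed for the lemma itself.
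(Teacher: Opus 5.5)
Your proposal is correct and matches the paper's proof: one inequality comes from evaluating the weak$^*$ convergence $\psi_{x_\alpha,y_\alpha^*}\to\psi$ at $T$. The other uses Banach--Alaoglu to extract a weak$^*$ convergent subnet, whose limit $\psi$ lies in $C$ and satisfies $\psi(T)=\lambda$. You also state explicitly that $\|Gx_\alpha\|\to 1$ survives passage to the subnet, which the paper leaves implicit in ``by construction''.
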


    \begin{proof}
        Let $\psi \in C$. Then, by definition of $C$, there exists a net $(x_\alpha, y_\alpha^*) \subset S_X \times S_{Y^*}$ such that $\|Gx_\alpha\| \to 1$ and $y_\alpha^* \otimes x_\alpha \xrightarrow{w^*} \psi.$ Consequently, we obtain $\lim_{\alpha} y_\alpha^*(Tx_\alpha)= \psi(T).$ Therefore,
        \[
        |\psi(T)| \le |\psi(T)| \le \sup \Bigl\{ |\lambda| : \exists\ \text{a net}\ (x_\alpha, y_\alpha^*) \subset S_X \times S_{Y^*},\ \|Gx_\alpha\| \to 1, y_\alpha^*(Tx_\alpha)\to\lambda \Bigr\}.
        \]
        Since $\psi \in C$ was arbitrary, we have
        \[
        \sup_{\psi \in C} |\psi(T)|\le |\psi(T)| \le \sup \Bigl\{ |\lambda| : \exists\ \text{a net}\ (x_\alpha, y_\alpha^*) \subset S_X \times S_{Y^*},\ \|Gx_\alpha\| \to 1, y_\alpha^*(Tx_\alpha)\to\lambda \Bigr\}.
        \]
        Conversely, let $\lambda$ be such that there exists a net $(x_\alpha, y_\alpha^*) \subset S_X \times S_{Y^*}$ with $\|Gx_\alpha\| \to 1$ and $y_\alpha^*(Tx_\alpha)\to\lambda.$ Since $(\psi_{x_\alpha, y_\alpha^*})$ lies in the weak*-compact set $B_{L(X,Y)^*}$, we may pass to a further subnet $(\alpha_\beta)$ such that $\psi_{x_{\alpha_\beta}, y_{\alpha_\beta}^*} \xrightarrow{w^*} \psi$, for some $\psi \in B_{L(X,Y)^*}$. By construction, $\psi \in C$. Now,
        \[
        \lambda = \lim_{\beta} y_{\alpha_\beta}^*(Tx_{\alpha_\beta}) = \lim_{\beta} [\psi_{x_{\alpha_\beta}, y_{\alpha_\beta}^*}](T) = \psi(T).
        \]
        Therefore, $|\lambda| = |\psi(T)| \le \sup_{\varphi \in C} |\varphi(T)|.$ Taking supremum over all such $\lambda$, we obtain
        \[
        \sup \Bigl\{ |\lambda| : \exists\ \text{a net}\ (x_\alpha, y_\alpha^*) \subset S_X \times S_{Y^*},\ \|Gx_\alpha\| \to 1, y_\alpha^*(Tx_\alpha)\to\lambda \Bigr\}\le \sup_{\psi \in C} |\psi(T)|.
        \]
        Thus, we have the required equality. This completes the proof.
    \end{proof}

    Continuing with the same notations, by using Lemma \ref{dual:lem1} we obtain a description of $B_{( L(X, Y), \|\cdot\|_G)^*}$ in the following result. For more study regarding dual unit ball one can follow the articles \cite{Godefroy, Spear, Kadets2, Generatingoperator} and the references therein.
    
	\begin{theorem}\label{Th1}
		Let $G\in L(X, Y)$ with $\|G\| = 1$. Then $B_{( L(X, Y), \|\cdot\|_G)^*} = \overline{\operatorname{co}}^{\,w^*}(C).$
	\end{theorem}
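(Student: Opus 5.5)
The plan is to realize $B_{(L(X,Y),\|\cdot\|_G)^*}$ as a bipolar of $C$ and then apply the Bipolar Theorem. Since $\|T\|_G\le\|T\|$ for all $T$, every functional that is bounded for $\|\cdot\|_G$ is also bounded for the operator norm. Hence $(L(X,Y),\|\cdot\|_G)^*$ sits inside $L(X,Y)^*$. I will work throughout in the dual pair $\langle L(X,Y), L(X,Y)^*\rangle$. The $w^*$-topology is $\sigma(L(X,Y)^*,L(X,Y))$, i.e.\ the topology in which the nets defining $C$ converge.

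\textbf{Step 1: $\|T\|_G=\sup_{\psi\in C}|\psi(T)|$ for every $T\in L(X,Y)$.} By Lemma \ref{dual:lem1}, it suffices to identify the net-supremum on its left-hand side with $\|T\|_G$. The inequality ``$\ge$'' follows from Proposition \ref{DP1} together with Proposition \ref{P2.2}(i). Indeed, $\|T\|_G=|\lambda|$ for some $\lambda$ that is a limit of $y_n^*(Tx_n)$ with $\|Gx_n\|\to1$, and sequences are nets. For ``$\le$'', take a net with $\|Gx_\alpha\|\to1$ and $y_\alpha^*(Tx_\alpha)\to\lambda$, and fix $\delta>0$. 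Eventually $\|Gx_\alpha\|>1-\delta$, so $|y_\alpha^*(Tx_\alpha)|\le\sup\{\|Tx\|:x\in S_X,\|Gx\|>1-\delta\}$. Letting $\alpha$ run and then taking the infimum over $\delta$ gives $|\lambda|\le\|T\|_G$. This step also yields $C\subset B_{(L(X,Y),\|\cdot\|_G)^*}$, because $|\psi(T)|\le\|T\|_G$ for every $\psi\in C$.

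\textbf{Step 2: computing the polars.} Take the polar of $C$ inside $L(X,Y)$. By Step 1, $C^\circ=\{T:\sup_{\psi\in C}|\psi(T)|\le1\}=B_{(L(X,Y),\|\cdot\|_G)}$. Its polar in $L(X,Y)^*$ is $\{\phi:|\phi(T)|\le1 \text{ whenever } \|T\|_G\le1\}$, which is exactly $B_{(L(X,Y),\|\cdot\|_G)^*}$. So $C^{\circ\circ}=B_{(L(X,Y),\|\cdot\|_G)^*}$. By the Bipolar Theorem, applied with the ambient space $L(X,Y)^*$ and the dual subspace $L(X,Y)$, this set equals $\overline{\operatorname{co}}^{\,w^*}(C\cup\{\theta\})$.

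\textbf{Step 3: removing $\theta$.} $C$ is nonempty: since $\|G\|=1$ there are $x_n\in S_X$ with $\|Gx_n\|\to1$, and by $w^*$-compactness of $B_{L(X,Y)^*}$ some subnet of $\psi_{x_n,y^*}$ converges. $C$ is also balanced. If $\psi_{x_\alpha,y_\alpha^*}\to\psi$, then for $|\mu|=1$ we have $\psi_{x_\alpha,\mu y_\alpha^*}\to\mu\psi$ with $\mu y_\alpha^*\in S_{Y^*}$. So $-\psi\in C$ whenever $\psi\in C$, and $\theta=\tfrac12\psi+\tfrac12(-\psi)\in\operatorname{co}(C)$. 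Therefore $\overline{\operatorname{co}}^{\,w^*}(C\cup\{\theta\})=\overline{\operatorname{co}}^{\,w^*}(C)$.

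The main obstacle is the topological bookkeeping in Step 2. The Bipolar Theorem must be applied in the correct pairing, so that the closure produced is the $\sigma(L(X,Y)^*,L(X,Y))$-closure. One must also check that polars are taken consistently: the polar of $C$ lives in $L(X,Y)$, and the bipolar lives back in $L(X,Y)^*$. The net-versus-sequence comparison in Step 1 is routine once it is set up with a fixed $\delta$ as above.
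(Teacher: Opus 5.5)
Your proposal is correct and follows essentially the same route as the paper. You identify $\|T\|_G=\sup_{\psi\in C}|\psi(T)|$ using the fixed-$\delta$ estimate together with the sequential description from Propositions \ref{DP1} and \ref{P2.2}, so that $C^\circ$ is the $G$-unit ball, then apply the Bipolar Theorem in the pairing $\langle L(X,Y)^*, L(X,Y)\rangle$, and finally use the symmetry of $C$ to discard $\theta$. Your extra checks settle points the paper leaves implicit: that $(L(X,Y),\|\cdot\|_G)^*$ embeds in $L(X,Y)^*$, that $C\neq\emptyset$, and that $C$ is invariant under all unimodular scalars, which is what makes the closed absolutely convex hull coincide with $\overline{\operatorname{co}}^{\,w^*}(C)$ over $\mathbb{C}$.
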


	\begin{proof}
		We first prove that $C\subseteq B_{(L(X, Y),\|\cdot\|_G)^*}$. Let $\psi\in C$. Then there exists a net $((x_i,y_i^*))\subset S_X\times S_{Y^*}$ such that $\|Gx_i\|\to1$ and $\psi_{x_i,y_i^*}\xrightarrow{w^*}\psi$. Fix $T\in L(X, Y )$ and $\delta>0$. Since $\|Gx_i\|\to1$, there exists $i_0$ such that $\|Gx_i\|>1-\delta$ for all $i\ge i_0$. Hence for all $i\ge i_0$,
		\[
		|\psi_{x_i,y_i^*}(T)|=|y_i^*(Tx_i)|\le\sup\bigl\{|z^*(Tw)|: w\in S_X, z^*\in S_{Y^*}, \|Gw\|>1-\delta\bigr\}.
		\]
		Passing to the limit and then taking the infimum over $\delta>0$, we obtain $|\psi(T)|\le \|T\|_G$. As $T$ is taken arbitrary, thus,  $\|\psi\|_{G,*}\le1$, and therefore $\psi\in B_{(L(X, Y),\|\cdot\|_G)^*}$. We next prove that $\sup_{\psi\in C}|\psi(T)|=\|T\|_G$ for all $T\in L(X, Y)$. Let $T\in (L(X, Y),\|\cdot\|_G)$. From Proposition \ref{DP1} and \ref{P2.2} observe that 
        \begin{align*}
        \|T\|_G=&\sup \Bigl\{ |\lambda| :\exists\ (x_n,y_n^*) \subset S_X \times S_{Y^*},\ \|Gx_n\|\to 1,\ y_n^*(Tx_n)\to \lambda \Bigr\} \\
        = &\sup \Bigl\{ |\lambda| :\exists\ (x_n,y_n^*) \subset S_X \times S_{Y^*},\ \|Gx_n\|\to 1,\ [y_n^* \otimes x_n](T)\to \lambda \Bigr\} \\
        = &\sup \Bigl\{ |\psi(T)| : \exists\ \text{a net}\ (x_{n_\alpha}, y_{n_\alpha}^*) \subset S_X \times S_{Y^*},\ \|Gx_{n_\alpha}\|\to 1,\ y_{n_\alpha}^* \otimes x_{n_\alpha} \xrightarrow{w^*} \psi\Bigr\} \\
        = &\sup_{\psi \in C} |\psi(T)|.
        \end{align*}
        It follows that $B = \{ T \in  L(X, Y) : |\psi(T)| \leq 1,\,\text{for all}\, \psi \in C \} = C^\circ,$ the polar of $C$. Now, by using the Bipolar theorem we have,
		\[
		C^{\circ \circ} = \overline{\operatorname{co}}^{\,w^*}(C \cup \{\theta\}).
		\]  
		But $ C^{\circ \circ} = (C^\circ)^\circ = B^\circ = B_{(L(X, Y),\|\cdot\|_G)^*} $. Moreover, $ C $ is symmetric: if $ \psi \in C $ via a net $ (x_i, y_i^*) $, then $ -\psi \in C $ via the net $ (x_i, -y_i^*) $. Therefore, $\theta$ lies in the convex hull of $ C $, and we obtain  
		\[
		B_{(L(X, Y),\|\cdot\|_G)^*} = \overline{\operatorname{co}}^{\,w^*}(C).
		\]
		This completes the proof.
	\end{proof}
	
	In the finite-dimensional case, the dual description simplifies as follows.
	
	\begin{theorem}\label{Th2}
		Let $X$ and $Y$ be finite-dimensional Banach spaces, let $G \in  L(X, Y)$ with $\|G\| = 1$. Define $\widetilde{C}: = \bigl\{ \psi_{x,y^*}\in B_{L(X,Y)^*} : (x,y^*) \in S_X \times S_{Y^*},\ \|Gx\| = 1 \bigr\}$. Then $B_{( L(X, Y), \|\cdot\|_G)^*} = \operatorname{co}(\widetilde{C}).$
	\end{theorem}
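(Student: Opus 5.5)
The plan is to reduce Theorem \ref{Th2} to Theorem \ref{Th1} by showing two things in the finite-dimensional setting: $C=\widetilde{C}$, and $\operatorname{co}(\widetilde{C})$ is already closed. Since $L(X,Y)$ is finite-dimensional, the weak$^*$ topology on $L(X,Y)^*$ coincides with the norm topology, so $\overline{\operatorname{co}}^{\,w^*}(C)=\overline{\operatorname{co}}(C)$.

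\emph{Step 1: $\widetilde{C}\subseteq C$.} This is immediate: if $\|Gx\|=1$, take the constant net $(x,y^*)$.

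\emph{Step 2: $C\subseteq \widetilde{C}$.} Let $\psi\in C$, obtained from a net $(x_\alpha,y_\alpha^*)$ with $\|Gx_\alpha\|\to1$ and $\psi_{x_\alpha,y^*_\alpha}\xrightarrow{w^*}\psi$. Since $S_X\times S_{Y^*}$ is compact, we pass to a subnet with $x_\alpha\to x\in S_X$ and $y^*_\alpha\to y^*\in S_{Y^*}$ in norm. By continuity of $G$ we get $\|Gx\|=1$. For each $S\in L(X,Y)$ we have
\[
|y^*_\alpha(Sx_\alpha)-y^*(Sx)|\le \|S\|\bigl(\|x_\alpha-x\|+\|y^*_\alpha-y^*\|\bigr)\to0,
\]
so $\psi_{x_\alpha,y^*_\alpha}\to\psi_{x,y^*}$ weak$^*$. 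Uniqueness of weak$^*$ limits (the topology is Hausdorff) then gives $\psi=\psi_{x,y^*}\in\widetilde{C}$. This step is the analogue of Proposition \ref{P2.5}.

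\emph{Step 3: closedness of $\operatorname{co}(\widetilde{C})$.} Consider the continuous map $(x,y^*)\mapsto\psi_{x,y^*}$ from $S_X\times S_{Y^*}$ into $L(X,Y)^*$; it is continuous by the estimate in Step 2. The set $\{x\in S_X:\|Gx\|=1\}\times S_{Y^*}$ is closed in a compact set, hence compact, so its image $\widetilde{C}$ is compact in the finite-dimensional space $L(X,Y)^*$. By Carathéodory's theorem, the convex hull of a compact set in a finite-dimensional space is compact: it is the continuous image of $\widetilde{C}^{\,d+1}\times\Delta_d$, where $d$ is the real dimension of $L(X,Y)^*$ and $\Delta_d$ is the standard simplex. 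Hence $\operatorname{co}(\widetilde{C})$ is closed.

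Combining Steps 1--3 with Theorem \ref{Th1} yields $B_{(L(X,Y),\|\cdot\|_G)^*}=\overline{\operatorname{co}}^{\,w^*}(C)=\operatorname{co}(\widetilde{C})$. The main point requiring care is Step 3, and specifically applying Carathéodory over $\mathbb{R}$ when $\mathbb{K}=\mathbb{C}$. This is handled by viewing $L(X,Y)^*$ as a real vector space of dimension $d$, since convexity only involves real combinations.
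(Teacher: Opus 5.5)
Your proof is correct, but it is organized differently from the paper's. The paper does not use Theorem~\ref{Th1} at all. It argues directly that $\widetilde{C}$ lies in the dual ball and, via Propositions~\ref{DP1} and \ref{P2.5}, that $\|T\|_G=\max_{\psi\in\widetilde{C}}|\psi(T)|$, so the $G$-unit ball is $\widetilde{C}^\circ$. It then applies the Bipolar theorem to $\widetilde{C}$ and drops the closure because $\widetilde{C}$ is compact.

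You instead derive Theorem~\ref{Th2} as a corollary of Theorem~\ref{Th1}. The compactness argument that the paper uses on the scalars $y_n^*(Tx_n)$ (this is what the omitted proof of Proposition~\ref{P2.5} amounts to) is moved in your proof to the functionals themselves. That gives the identity $C=\widetilde{C}$, which says a little more: in finite dimensions the set $C$ of Theorem~\ref{Th1} is already weak$^*$ closed and consists only of elementary tensors $\psi_{x,y^*}$ with $x\in M_G$.

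Your route avoids running a second bipolar argument. It also justifies the closedness of $\operatorname{co}(\widetilde{C})$ explicitly, via Carath\'eodory over $\mathbb{R}$; the paper merely asserts this. The cost is that your argument inherits everything Theorem~\ref{Th1} needs, whereas the paper's proof is self-contained given Proposition~\ref{P2.5}.

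Both routes tacitly use that the relevant set is circled, i.e.\ $\lambda\psi_{x,y^*}=\psi_{x,\lambda y^*}$ for $\lambda\in S^1$. This is needed to pass from the closed absolutely convex hull produced by the Bipolar theorem to the plain convex hull. In your proof that step is hidden inside Theorem~\ref{Th1}, whose proof only records symmetry under $-1$.
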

	
	\begin{proof}
		It is not difficult to see from Proposition \ref{DP1} and \ref{P2.5} that $\widetilde{C}\subseteq B_{(L(X, Y),\|\cdot\|_G)^*}$. Since $S_X$ and $S_{Y^*}$ are compact, the condition $ \|Gx\| = 1$ defines a closed subset of $ S_X$ and consequently, the set of pairs $ (x,y^*) $ satisfying it, is compact. Thus, $\widetilde{C}$ is a compact subset of the finite-dimensional Banach space $ L(X, Y)^*$. For any $ T \in  L(X, Y) $, the definition of $ \|T\|_G $ and the compactness imply that the supremum is attained:
		\[
		\|T\|_G = \max \{|y^*(Tx)|: (x,y^*) \in S_X \times S_{Y^*}, \|Gx\| = 1\}= \max_{\psi \in \widetilde{C}} |\psi(T)|.
		\]
		Thus, $B_{( L(X, Y), \|\cdot\|_G)}=\{T\in L(X, Y):|\psi(T)| \le 1,\, \text{for all}\, \psi \in \widetilde{C}\}= \widetilde{C}^\circ.$ Applying the Bipolar theorem we obtain $B^\circ = \overline{\operatorname{conv}}(\widetilde{C}).$ But $B^\circ = (\widetilde{C}^\circ)^\circ = B_{( L(X, Y), \|\cdot\|_G)^*}$. Since $\widetilde{C}$ is compact, $B_{( L(X, Y), \|\cdot\|_G)^*} = \operatorname{conv}(\widetilde{C})$.
	\end{proof}
	
	We next investigate the smooth points of the unit ball of $( L(X, Y ),\|\cdot\|_G)$. The next lemma provides a sufficient condition of support functional at some element in $( L(X, Y ),\|\cdot\|_G)$. By using this lemma we characterizes the said smooth points. Before going to the results, we denote
	\[
	\mathcal{A}_G(T):=\bigl\{(x_\alpha,y_\alpha^*)\in S_X\times S_{Y^*}:\|Gx_\alpha\|\to1,\;y_\alpha^*(Tx_\alpha)\to 1\bigr\}.
	\]
	
	\begin{lemma}\label{lem:A}
		Let $G\in L(X, Y)$ with $\|G\|=1$. Let $T\in L(X, Y)$ be such that $\|T\|_G=1$. If $\varphi\in \overline{\{\psi_{x_\alpha,y_\alpha^*}:(x_\alpha,y_\alpha^*)\in\mathcal A_G(T)\}}^{w^*}$ then $\varphi(T)=1$ with $\|\varphi\|_{G, *}=1$ i.e., $\varphi$ is a support functional of $T$ in $(L(X, Y), \|.\|_G)^*$.
	\end{lemma}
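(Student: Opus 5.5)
The plan is to show two things: that $\varphi$ lies in the $w^*$-closed set $B_{(L(X,Y),\|\cdot\|_G)^*}$, and that $\varphi(T)=1$. Together with $\|T\|_G=1$ these give $\|\varphi\|_{G,*}=1$. Throughout, $\varphi$ is regarded as an element of $L(X,Y)^*$ (the space carrying the $w^*$-topology used in Lemma \ref{dual:lem1} and Theorem \ref{Th1}). Since $\|\cdot\|_G\le\|\cdot\|$, it then acts on $(L(X,Y),\|\cdot\|_G)$ with norm $\|\varphi\|_{G,*}=\sup_{\|S\|_G\le 1}|\varphi(S)|$.

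\emph{Step 1: the generating functionals lie in $C$.} Fix $(x_\alpha,y_\alpha^*)\in\mathcal A_G(T)$. By definition $\|Gx_\alpha\|\to1$. By $w^*$-compactness of $B_{L(X,Y)^*}$, every $w^*$-cluster point of the net $(\psi_{x_\alpha,y_\alpha^*})$ belongs to $C$. I read the statement as: $\varphi$ is a $w^*$-limit of functionals $\psi_{x_\alpha,y_\alpha^*}$ indexed along a net in $\mathcal A_G(T)$. Passing to a subnet if necessary, $\varphi\in C$, and Theorem \ref{Th1} (more precisely its first part, $C\subseteq B_{(L(X,Y),\|\cdot\|_G)^*}$) gives $\|\varphi\|_{G,*}\le1$.

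If instead the closure is meant over the set of individual functionals $\psi_{x,y^*}$ that appear in some net from $\mathcal A_G(T)$, I would argue as follows. The set $B_{(L(X,Y),\|\cdot\|_G)^*}\cap B_{L(X,Y)^*}$ is $w^*$-closed, because it is an intersection of the $w^*$-closed sets $\{\psi:|\psi(S)|\le\|S\|_G\}$. So it suffices to put each relevant functional, or a $w^*$-approximation of it, inside this set and then pass to closures.

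\emph{Step 2: evaluation at $T$.} Evaluation at $T$ is $w^*$-continuous. Each generating net satisfies $\psi_{x_\alpha,y_\alpha^*}(T)=y_\alpha^*(Tx_\alpha)\to1$. Hence $\varphi(T)=1$.

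\emph{Step 3: conclusion.} From Step 1 we have $\|\varphi\|_{G,*}\le 1$. From Step 2, $1=\varphi(T)\le\|\varphi\|_{G,*}\|T\|_G=\|\varphi\|_{G,*}$. Therefore $\|\varphi\|_{G,*}=1$ and $\varphi(T)=\|T\|_G$, so $\varphi$ is a support functional of $T$.

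The main obstacle is interpreting the closure in the statement correctly. A single pair $(x,y^*)$ with $\|Gx\|<1$ need not satisfy $\|\psi_{x,y^*}\|_{G,*}\le1$, so the norm bound genuinely requires passing to limits along nets with $\|Gx_\alpha\|\to1$. I would therefore state explicitly that the closure is understood as the set of $w^*$-limits of nets from $\mathcal A_G(T)$; under that reading the argument reduces cleanly to Theorem \ref{Th1}.
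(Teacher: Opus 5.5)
Your proof is correct and follows essentially the same route as the paper. The bound $|\varphi(S)|\le\|S\|_G$ comes from the same $\varepsilon$--$\delta$ estimate that gives $C\subseteq B_{(L(X,Y),\|\cdot\|_G)^*}$ in Theorem \ref{Th1}: the paper redoes that estimate inline, while you cite it. The equality $\varphi(T)=\lim_\alpha y_\alpha^*(Tx_\alpha)=1$ then follows from $w^*$-continuity of evaluation at $T$.

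Your caution about the closure is also well placed. Read literally as the closure of all individual terms of nets in $\mathcal A_G(T)$, the statement would fail. The net-limit reading you adopt is exactly the one the paper's proof uses implicitly when it writes $\varphi(S)=\lim_\alpha y_\alpha^*(Sx_\alpha)$.
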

	\begin{proof}
		Fix $\varepsilon>0$ and $S\in  L(X, Y)$. By the definition of $\|\cdot\|_G$, there exists $\delta>0$ such that
		\[
		\sup\bigl\{|y^*(Sx)|:x\in S_X, y^*\in S_{Y^*}, \|Gx\|>1-\delta\bigr\}
		<\|S\|_G+\varepsilon.
		\]
		Let $(x_\alpha,y_\alpha^*)\in\mathcal A_G(T)$. Since $\|Gx_\alpha\|\to 1$, we can choose $\alpha_0$ so that $\|Gx_\alpha\|>1-\delta$ for all $\alpha\geq\alpha_0$.
		Consequently, for every $\alpha\geq\alpha_0$, $|y_\alpha^*(Sx_\alpha)|<\|S\|_G+\varepsilon.$ By the given hypothesis, $\varphi(S)=\lim_\alpha y_\alpha^*(Sx_\alpha)\le\|S\|_G+\varepsilon.$ Since $\varepsilon>0$ was arbitrary, we obtain $\varphi(S)\le\|S\|_G$ for all $S\in  L(X, Y)$. Therefore, $\|\varphi\|_{G, *}\le 1$. Moreover, $\varphi(T)=\lim_\alpha y_\alpha^*(Tx_\alpha)=1=\|T\|_G.$ Consequently, $\|\varphi\|_{G, *}=1$. This completes the proof.
	\end{proof}

	This description allows us to characterize smooth points of the unit ball in $(L(X, Y), \|.\|_G)$.

	\begin{theorem}\label{thm:smooth-G-infinite}
		Let $X,Y$ be Banach spaces and let $G\in L(X, Y )$ with $\|G\|=1$.
		Assume that $\|\cdot\|_G$ is a norm on $ L(X, Y )$.
		Let $T\in L(X, Y )$ satisfy $\|T\|_G=1$.
		The following assertions are equivalent:
		\begin{enumerate}
			\item[(i)]
			$T$ is a smooth point of the unit ball of $( L(X, Y ),\|\cdot\|_G)$.
			\item[(ii)]
			$\overline{\{\psi_{x_\alpha,y_\alpha^*}:(x_\alpha,y_\alpha^*)\in\mathcal A_G(T)\}}^{w^*}=\{\varphi\}$, where $\varphi (T)=1$. 
		\end{enumerate}
	\end{theorem}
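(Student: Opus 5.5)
Write $K_T:=\overline{\{\psi_{x_\alpha,y_\alpha^*}:(x_\alpha,y_\alpha^*)\in\mathcal A_G(T)\}}^{w^*}$. The plan is to compare $K_T$ with the face $J(T):=\{f\in B_{(L(X,Y),\|\cdot\|_G)^*}: f(T)=1\}$ of support functionals at $T$. Recall that $T$ is smooth exactly when $J(T)$ is a singleton. By Lemma \ref{lem:A}, $K_T\subseteq J(T)$. We also need $K_T\neq\emptyset$. Since $\|T\|_G=1$, Propositions \ref{DP1} and \ref{P2.2} give sequences $x_n\in S_X$, $y_n^*\in S_{Y^*}$ with $\|Gx_n\|\to1$ and $y_n^*(Tx_n)\to\lambda$, $|\lambda|=1$. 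Replacing $y_n^*$ by $\overline{\lambda}y_n^*$ yields an element of $\mathcal A_G(T)$. By weak$^*$ compactness of $B_{L(X,Y)^*}$, the corresponding $\psi$'s have a weak$^*$ cluster point, which lies in $K_T$. (Here the closure is taken in $L(X,Y)^*$; one should note that every element of $K_T$ is bounded by $\|\cdot\|_G$, which Lemma \ref{lem:A} provides.)

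\emph{(i)$\Rightarrow$(ii).} If $J(T)=\{\varphi\}$, then $\emptyset\neq K_T\subseteq\{\varphi\}$ forces $K_T=\{\varphi\}$, and $\varphi(T)=1$. This direction is immediate.

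\emph{(ii)$\Rightarrow$(i).} This is the main step: we must show $J(T)\subseteq \overline{\operatorname{co}}^{w^*}(K_T)$, after which $J(T)=\{\varphi\}$. Take $f\in J(T)$. By Theorem \ref{Th1}, $f\in\overline{\operatorname{co}}^{w^*}(C)$. Since $f(T)=1=\sup_{\psi\in C}|\psi(T)|$, $f$ lies in a weak$^*$-exposed face of $\overline{\operatorname{co}}^{w^*}(C)$. The idea is the standard Milman-type argument: the face $F=\{g\in\overline{\operatorname{co}}^{w^*}(C): \Re g(T)=1\}$ is a weak$^*$ compact convex set whose extreme points are extreme in $\overline{\operatorname{co}}^{w^*}(C)$. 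By Milman's converse of Krein--Milman, these extreme points lie in $\overline{C}^{w^*}$, and they satisfy $\Re g(T)=1$. A quantitative version will be safer than the bare extreme-point argument. For $\varepsilon>0$, write $f$ as a weak$^*$ limit of convex combinations $\sum t_i\psi_i$ with $\psi_i\in C$. Since $\Re\sum t_i\psi_i(T)\to1$ and each $\Re\psi_i(T)\le1$, the total weight on those $\psi_i$ with $\Re\psi_i(T)\le1-\sqrt{\varepsilon}$ is at most $\sqrt\varepsilon$ eventually. Hence $f$ is a weak$^*$ limit of convex combinations of functionals in $C_\varepsilon:=\{\psi\in C:\Re\psi(T)>1-\sqrt\varepsilon\}$, up to an error tending to zero in $\|\cdot\|_{G,*}$.

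Next we show that $\bigcap_{\varepsilon>0}\overline{C_\varepsilon}^{w^*}\subseteq K_T$. This is a diagonal-net argument. A $\psi\in C_\varepsilon$ comes from a net $(x_\alpha,y_\alpha^*)$ with $\|Gx_\alpha\|\to1$ and $\Re y_\alpha^*(Tx_\alpha)\to\Re\psi(T)>1-\sqrt\varepsilon$. Letting $\varepsilon\to0$ and indexing over (neighbourhood, $\varepsilon$, $\delta$), we obtain a net in $S_X\times S_{Y^*}$ with $\|Gx\|\to1$ and $\Re y^*(Tx)\to1$. Since $|y^*(Tx)|\le\|T\|_G+o(1)=1+o(1)$, this gives $y^*(Tx)\to1$, so the net lies in $\mathcal A_G(T)$ and its weak$^*$ limit lies in $K_T$. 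Combining this with the convex-combination approximation and weak$^*$ compactness gives $f\in\overline{\operatorname{co}}^{w^*}(K_T)=\{\varphi\}$, so $f=\varphi$ and $T$ is smooth.

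The expected obstacle is exactly this last passage: controlling the convex combinations uniformly, and making the diagonal net rigorous. If the quantitative route becomes messy, the fallback is the extreme-point route. Any extreme point $e$ of the weak$^*$ compact face $J(T)$ is extreme in $B_{(L(X,Y),\|\cdot\|_G)^*}=\overline{\operatorname{co}}^{w^*}(C)$, hence $e\in\overline{C}^{w^*}$ by Milman's theorem. One then checks directly from the definition of $C$ (again via a diagonal net) that $\overline{C}^{w^*}\cap\{e(T)=1\}\subseteq K_T$. Krein--Milman then gives $J(T)=\overline{\operatorname{co}}^{w^*}\operatorname{ext}J(T)\subseteq\overline{\operatorname{co}}^{w^*}K_T=\{\varphi\}$.
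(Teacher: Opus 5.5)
Your proposal is correct. Your fallback argument is exactly the paper's proof of (ii)$\Rightarrow$(i):
\begin{itemize}
\item apply Krein--Milman to $J_{\|\cdot\|_G}(T)$;
\item extreme points of this face are extreme in $B_{(L(X,Y),\|\cdot\|_G)^*}=\overline{\operatorname{co}}^{\,w^*}(C)$;
\item Milman's converse places them in $\overline{C}^{w^*}$;
\item evaluating at $T$ puts them in the set of (ii).
\end{itemize}
You are more careful than the paper at two points it passes over silently. First, in (i)$\Rightarrow$(ii) the paper never checks that the set in (ii) is nonempty; you do. Second, the paper goes straight from $\psi_0\in\overline{C}^{w^*}$ to a net witnessing $\psi_0\in C$. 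Your diagonal net is what justifies this step. Equivalently, $C=\bigcap_{\delta>0}\overline{\{\psi_{x,y^*}: x\in S_X,\ y^*\in S_{Y^*},\ \|Gx\|>1-\delta\}}^{w^*}$ is weak$^*$ closed, and the set in (ii) is just $\{\psi\in C:\psi(T)=1\}$.

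Your primary quantitative route is a genuinely different, valid argument that avoids Krein--Milman and Milman's converse. However, its ``combining'' step cannot be done by taking termwise limits of the convex combinations, since the number of terms is unbounded. It needs separation plus compactness:
\begin{itemize}
\item Put $A_\varepsilon:=\overline{C_\varepsilon}^{w^*}$. These sets are decreasing and weak$^*$ compact, and $A_0:=\bigcap_\varepsilon A_\varepsilon$ lies in the set of (ii) by your diagonal-net step.
\item Your weight estimate gives $\Re f(S)\le\max_{g\in A_\varepsilon}\Re g(S)+2\sqrt{\varepsilon}\,\|S\|_G$ for all $S\in L(X,Y)$.
\item Compactness gives $\max_{A_\varepsilon}\Re g(S)\downarrow\max_{A_0}\Re g(S)$ as $\varepsilon\to 0$.
\item Hahn--Banach separation in $(L(X,Y)^*,w^*)$ then yields $f\in\overline{\operatorname{co}}^{\,w^*}(A_0)$.
\end{itemize}
With this filled in, the quantitative route is a hands-on proof of a general fact: the face of $\overline{\operatorname{co}}^{\,w^*}(C)$ exposed by $T$ is the closed convex hull of the points of $C$ where $T$ takes the value $1$. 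The extreme-point route used in the paper obtains the same fact more cheaply.
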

	\begin{proof}
		$\textit{(i)}\implies \textit{(ii)}.$
		Assume that $T$ is smooth in $(L(X, Y), \|\cdot\|_G)$. Then there exists a unique support functional $\varphi$ in
		$S_{( L(X, Y ), \|.\|_G)^*}$ such that $\varphi(T)=1$. Thus, it immediately follows from Lemma \ref{lem:A} that $\overline{\{\psi_{x_\alpha,y_\alpha^*}:(x_\alpha,y_\alpha^*)\in\mathcal A_G(T)\}}^{w^*}=\{\varphi\}$, where $\varphi (T)=1$.
		
		\medskip
		
		\noindent
		$\textit{(ii)}\implies \textit{(i)}.$
		It is clear from Lemma \ref{lem:A} that $\varphi\in J_{\|.\|_G}(T)$. To prove $T$ is smooth in $(L(X, Y), \|.\|_G)$, it is enough to prove that $J_{\|.\|_G}(T)=\{\varphi\}$. Observe that $J_{\|.\|_G}(T)$ is a non-empty, weak*-compact, convex subset of $B_{(L(X, Y), \|.\|_G)^*}$. By the Krein-Milman theorem, $J_{\|.\|_G}(T)$ is the weak$^*$ closed convex hull of its extreme points. Let $\psi_0$ be an extreme point of $J_{\|.\|_G}(T)$. Because $J_{\|.\|_G}(T)$ is a face of $B_{( L(X, Y), \|.\|_G)^*}$, $\psi_0$ is also an extreme point of $B_{( L(X, Y), \|.\|_G)^*}$. We know from Theorem \ref{Th1} that $B_{( L(X, Y), \|\cdot\|_G)^*} = \overline{\operatorname{co}}^{\,w^*}(C)$. Also, by Milman's converse, every extreme point of $B_{( L(X, Y), \|\cdot\|_G)^*}$ lies in the weak$^*$ closure of $C$. Therefore, there exists a net $(x_i,y_i^*)\subset S_X\times S_{Y^*}$ with $\|Gx_i\|\to 1$ and $\psi_{x_i,y_i^*}\xrightarrow{w^*}\psi_0$. Since $\psi_0(T)=1$, therefore,
		\[
		\lim_i y_i^*(Tx_i)=\lim_i\psi_{x_i,y_i^*}(T)=\psi_0(T)=1.
		\]
		Consequently, $(x_i,y_i^*)\in \mathcal{A}_G(T)$, so $\psi_0\in \overline{\{\psi_{x_\alpha,y_\alpha^*}:(x_\alpha,y_\alpha^*)\in\mathcal A_G(T)\}}^{w^*}$. However, 
		\[
		\overline{\{\psi_{x_\alpha,y_\alpha^*}:(x_\alpha,y_\alpha^*)\in\mathcal A_G(T)\}}^{w^*}=\{\varphi\}.
		\]
		Consequently, $\psi_0=\varphi$. Thus, every extreme point of $J_{\|.\|_G}(T)$ equals $\varphi$. Since $J_{\|.\|_G}(T)$ is the weak* closed convex hull of its extreme points, we obtain $J_{\|.\|_G}(T)=\{\varphi\}$, as desired. Thus, the proof is complete.
	\end{proof}

	\subsection{Hilbert space applications and further properties}
	
	As an application to our study, we now specialize to Hilbert spaces. We first consider the finite-dimensional Hilbert space case. In this case, whenever we use the definitions of $\nu_G(T)$ and $\|T\|_G$, we use Proposition \ref{P2.5} as our reference.
    
	
	\begin{theorem}\label{Th3.11}
		Let $H_{1}$ and $H_{2}$ be finite-dimensional Hilbert spaces, and let $G \in S_{L(H_{1}, H_{2})}$ be a relative spear operator. Then $G$ is a partial isometry.
	\end{theorem}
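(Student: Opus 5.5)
We will show something stronger: in finite-dimensional Hilbert spaces a relative spear operator $G$ must satisfy $\dim H_2 = 1$. Then $G = \langle \cdot, e\rangle u$ for unit vectors $e \in H_1$ and $u \in H_2$. Such a $G$ is clearly a partial isometry, because $G^*G$ is the orthogonal projection onto $\operatorname{span}\{e\}$.

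\textbf{Step 1: explicit formulas.} Let $E := \ker(G^*G - I)$. Since $\|G\| = 1$, this is the eigenspace of $G^*G$ for its largest eigenvalue, and $M_G = S_E$; it is nonempty because $H_1$ is finite-dimensional. Moreover $G|_E$ is an isometry onto $G(E)$.

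By Proposition \ref{P2.5}(i), $\|T\|_G = \max\{\|Tx\| : x \in S_E\} = \|T|_E\|$.

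For $\nu_G(T)$ we use Proposition \ref{P2.5}(ii). We identify $y^* \in S_{H_2^*}$ with a unit vector $y \in H_2$, so the condition is $\langle Gx, y\rangle = 1$ with $\|x\| = \|y\| = 1$. This forces $\|Gx\| = 1$, hence $x \in S_E$. Equality in Cauchy--Schwarz then forces $y = Gx$. Therefore
\[
\nu_G(T) = \max\{|\langle Tx, Gx\rangle| : x \in S_E\}.
\]
This is the numerical radius of the compression $P_E G^*T|_E$ as an operator on $E$.

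\textbf{Step 2: $G(E) = H_2$.} Suppose not, and pick a unit vector $w \perp G(E)$ and a unit vector $e \in E$. Set $T := \langle \cdot, e\rangle w$. Then $\|T\|_G = \|T|_E\| = 1$. On the other hand $\langle Tx, Gx\rangle = \langle x, e\rangle\langle w, Gx\rangle = 0$ for every $x \in E$, so $\nu_G(T) = 0$. This contradicts $\nu_G(T) = \|T\|_G$.

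\textbf{Step 3: $\dim E = 1$.} By Step 2, $G|_E$ is a unitary map from $E$ onto $H_2$. Suppose $\dim E \ge 2$, and choose orthonormal vectors $e_1, e_2 \in E$. Define
\[
T := \langle \cdot, e_1\rangle\, Ge_2 .
\]
Then $\|T|_E\| = 1$, because $\|Ge_2\| = 1$. For $x \in S_E$,
\[
|\langle Tx, Gx\rangle| = |\langle x, e_1\rangle|\,|\langle Ge_2, Gx\rangle| = |\langle x, e_1\rangle|\,|\langle e_2, x\rangle| \le \tfrac12 ,
\]
using that $G|_E$ is isometric and $|\langle x, e_1\rangle|^2 + |\langle x, e_2\rangle|^2 \le 1$. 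This holds over both $\mathbb{R}$ and $\mathbb{C}$. So $\nu_G(T) \le \tfrac12 < 1 = \|T\|_G$, again a contradiction.

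\textbf{Conclusion.} Hence $\dim E = 1$, and so $\dim H_2 = \dim G(E) = 1$. Write $H_2 = \operatorname{span}\{u\}$ with $\|u\| = 1$. Then $G = \langle \cdot, e\rangle u$ with $\|e\| = \|G\| = 1$, and $G^*G$ is the projection onto $\operatorname{span}\{e\}$, so $G$ is a partial isometry.

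The main point needing care is Step 1, specifically the reduction of $\nu_G$ and $\|\cdot\|_G$ to the compression onto $E$. It relies on the equality cases of Cauchy--Schwarz and on the fact that $\|Gx\| = 1$ with $\|x\| = 1$ forces $G^*Gx = x$, which holds because $\langle G^*Gx, x\rangle = 1$ and $\|G^*G\| \le 1$. Once these formulas are in hand, the two counterexamples in Steps 2 and 3 are routine.
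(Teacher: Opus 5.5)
Your proof is correct, and it takes a different route from the paper that yields a considerably stronger conclusion.

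The paper argues by contradiction through the singular value decomposition. If some singular value $s_i$ lies in $(0,1)$, with $i$ minimal, the test operator $T=v_i\otimes u_1$ has $\|T\|_G=1$. At the same time $V_G(T)=\{0\}$, because Cauchy--Schwarz forces $y=Gx$ with $x\in\operatorname{span}\{u_1,\dots,u_{i-1}\}$.

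Your Steps 1--2 are the coordinate-free version of this computation. The paper's $v_i$ is just one particular unit vector orthogonal to $G(E)=\operatorname{span}\{v_1,\dots,v_{i-1}\}$. You show that any such vector gives a contradiction, hence $G(E)=H_2$.

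Step 2 alone already proves the theorem as stated. For $u\in E$ and $v\in E^\perp$ one has $\langle Gv,Gu\rangle=\langle v,G^*Gu\rangle=\langle v,u\rangle=0$. So $G(E^\perp)\perp G(E)=H_2$, which forces $G|_{E^\perp}=0$; since $G$ is isometric on $E$, $G^*G=P_E$.

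Your Step 3 is a genuinely new ingredient not present in the paper. It uses a second test operator whose compression $P_EG^*T|_E=\langle\cdot,e_1\rangle e_2$ has numerical radius at most $\frac12$. This gives a complete classification. Conversely, every norm-one $G=\langle\cdot,e\rangle u$ into a one-dimensional $H_2$ is relative spear, since then $E=\operatorname{span}\{e\}$ and both $\|T\|_G$ and $\nu_G(T)$ equal $\|Te\|$. Hence a norm-one $G$ between finite-dimensional Hilbert spaces is a relative spear operator if and only if $\dim H_2=1$.

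This shows the paper's partial-isometry conclusion is far from sharp. Under the paper's standing assumption that $\|\cdot\|_G$ is a norm, which here (since $\|T\|_G=\|T|_E\|$) forces $E=H_1$, it even forces $\dim H_1=1$. The paper's argument is the shorter path to the stated claim; yours gives the sharp description.
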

	
	\begin{proof}
		By a singular value decomposition of $G$, for every $x\in H_1$ we define
		\[
		Gx=\sum_{j=1}^{n} s_j \langle x,u_j\rangle v_j,
		\]
		where $s_1\ge s_2\ge\dots\ge s_n\ge0$ are the singular values of $G$, $\{u_j\}_{j=1}^n$ is an orthonormal basis of $H_1$ and $\{v_j\}_{j=1}^n$ is an orthonormal set in $H_2$. Since $\|G\|=1$, we have $s_1=1$. Suppose, for a contradiction, that $G$ is not a partial isometry. Then there exists an index $i$ such that $0<s_{i}<1$. Let $i$ be the smallest such index, so that $s_{1}=\dots =s_{i-1}=1$ and $s_{i}<1$. Define the rank-one operator $T=v_{i}\otimes u_{1}$, i.e., $Tx=\langle x,u_{1}\rangle v_{i}$. We show that $\|T\|_G\neq \nu_{G}(T)$. First we show that $\|T\|_{G}=1$. If $x=u_1$, then $\|Gx\|=\|Gu_1\|=s_1=1$. By Choosing $y=v_i$, we obtain $\langle Tx,y\rangle      =\langle v_i,v_i\rangle =1.$ Thus, $1\in S_G(T)$, so $\|T\|_G \ge 1$. Observe that $\|T\|=1$ and we know $\|T\|_{G}\le\|T\|$. Thus, $\|T\|_{G}=1$. Next, we show that $\nu_{G}(T)=0$. Let $\lambda\in V_G(T)$. Then there exist $x\in S_{H_1}$ and $y\in S_{H_2}$ such that $\langle Gx,y\rangle=1$ and $\lambda=\langle Tx, y\rangle.$ It follows that $|\langle Gx, y\rangle|=\|Gx\|\|y\|$ and $\|Gx\|=1$. Consequently, the equality case of the Cauchy-Schwarz inequality implies that $y$ is a scalar multiple of $Gx$. Using $\langle Gx, y\rangle=1$ and $\|Gx\|=\|y\|=1,$ we obtain $y=Gx$. Write
		\[
		x=\sum_{j=1}^n a_j u_j ,
		\qquad \sum_{j=1}^n |a_j|^2=1 .
		\]
		Then $\|Gx\|^2=\sum_{j=1}^n s_j^2 |a_j|^2.$ Since $\|Gx\|=1$ and $s_j\le s_i<1$ for $j\ge i$, it follows that $a_j=0$ for $j\ge i$. Thus, $x\in \operatorname{span}\{u_1,\dots,u_{i-1}\}$ and consequently, $y=Gx\in \operatorname{span}\{v_1,\dots,v_{i-1}\}.$ Now,
		\[
		\lambda=\langle Tx,y\rangle =\langle x,u_1\rangle \langle v_i,y\rangle .
		\]
		Since $y\in\operatorname{span}\{v_1,\dots,v_{i-1}\}$ and $v_i$ is orthogonal to this subspace, we obtain $\langle v_i, y\rangle=0.$ Hence $\lambda=0$. Therefore, $V_G(T)=\{0\}$ and $\nu_G(T)=0$. Since $\|T\|_G=1$ and $\nu_G(T)=0$, we obtain $\|T\|_G\ne \nu_G(T),$ which contradicts that $G$ is a relative spear operator. Hence no singular value of $G$ lies in $(0,1)$. Thus, all non-zero singular values are equal to $1$, and therefore $G^*G$ is an orthogonal projection. Consequently, $G$ is a partial isometry.
	\end{proof}
	
	We next obtain a structural characterization in terms of the norm-attaining set $M_G$. For a subset $E \subset X$, the distance from $x$ to $E$ is defined as $\operatorname{dist}(x,E):=\inf_{y\in E}\|x-y\|.$

	\begin{theorem}\label{Th3.12}
		Let $H$ be a Hilbert space and let 
		$G \in L(H)$ be a norm-attaining operator with $\|G\|=1$. Let $E = \overline{\operatorname{span}}(M_G).$ Then the following are equivalent:
		\begin{enumerate}
			\item[(i)] $\gamma := \sup\{\|Gx\| : x \in S_H \cap E^\perp\} < 1.$
			\item[(ii)] For every $\varepsilon>0$ there exists $\delta>0$ such that
			\[
			x\in S_H,\ \|Gx\|>1-\delta \Longrightarrow \operatorname{dist}(x,E)<\varepsilon.
			\]
			\item[(iii)] For every $T\in L(H)$,
			$\|T\|_G = \sup_{x\in M_G}\|Tx\|.$
		\end{enumerate}
	\end{theorem}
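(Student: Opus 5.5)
The whole argument rests on a Hilbert-space description of $M_G$ and $E$. Since $\|G\|=1$, the operator $I-G^*G$ is positive. For $x\in S_H$ we have $\|Gx\|=1$ if and only if $\langle (I-G^*G)x,x\rangle=0$, and for a positive operator this holds if and only if $(I-G^*G)x=\theta$.

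Consequently
\[
M_G=S_H\cap\ker(I-G^*G),\qquad E=\ker(I-G^*G),
\]
where $E$ is closed because it is a kernel. Since $G$ is norm-attaining, $E\neq\{\theta\}$. For $e\in E$ and $f\in E^\perp$ we get $\langle Ge,Gf\rangle=\langle G^*Ge,f\rangle=\langle e,f\rangle=0$. Hence, writing $x=e+f$ with $e=P_Ex$ and $f=P_{E^\perp}x$,
\[
\|Gx\|^2=\|e\|^2+\|Gf\|^2 .
\]
This Pythagorean splitting is the main tool. I expect the only real care to be needed in (iii)$\Rightarrow$(i), where a suitable test operator has to be chosen.

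\emph{(i)$\Rightarrow$(ii).} For $x\in S_H$ with $x=e+f$ as above, we have $\|Gf\|\le\gamma\|f\|$. The splitting then gives
\[
\|Gx\|^2\le \|e\|^2+\gamma^2\|f\|^2=1-(1-\gamma^2)\|f\|^2 .
\]
Since $\operatorname{dist}(x,E)=\|f\|$, it follows that
\[
\operatorname{dist}(x,E)^2\le \frac{1-\|Gx\|^2}{1-\gamma^2}.
\]
Given $\varepsilon>0$, choose $\delta>0$ with $\bigl(1-(1-\delta)^2\bigr)/(1-\gamma^2)<\varepsilon^2$. Then $\|Gx\|>1-\delta$ forces $\operatorname{dist}(x,E)<\varepsilon$.

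\emph{(ii)$\Rightarrow$(iii).} The inequality $\sup_{x\in M_G}\|Tx\|\le\|T\|_G$ is immediate, since every $x\in M_G$ satisfies $\|Gx\|=1>1-\delta$ for all $\delta>0$.

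For the reverse inequality, fix $\varepsilon\in(0,1/2)$ and take the $\delta$ given by (ii). Let $x\in S_H$ with $\|Gx\|>1-\delta$. Then $e=P_Ex$ satisfies $\|x-e\|<\varepsilon$, so $\|e\|>1-\varepsilon$, and $u:=e/\|e\|\in M_G$. We have $\|x-u\|\le \|x-e\|+(1-\|e\|)<2\varepsilon$. Therefore
\[
\|Tx\|\le \|Tu\|+2\varepsilon\|T\|\le \sup_{M_G}\|Tz\|+2\varepsilon\|T\|.
\]
Taking the supremum over such $x$, then the infimum over $\delta$, and finally letting $\varepsilon\to0$ gives $\|T\|_G\le\sup_{M_G}\|Tx\|$.

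\emph{(iii)$\Rightarrow$(i).} We argue by contradiction. Since $\|G\|=1$, we always have $\gamma\le1$, so suppose $\gamma=1$. Choose $f_n\in S_H\cap E^\perp$ with $\|Gf_n\|\to1$.

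Take $T=P_{E^\perp}$, the orthogonal projection onto $E^\perp$. Then $T$ vanishes on $M_G\subset E$, so $\sup_{x\in M_G}\|Tx\|=0$. On the other hand $\|Tf_n\|=1$ and $\|Gf_n\|\to1$. By Proposition \ref{P2.2}(ii) and Proposition \ref{DP1}, this gives $\|T\|_G\ge 1$, which contradicts (iii). Hence $\gamma<1$.
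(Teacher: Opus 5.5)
Your proposal is correct and follows essentially the same route as the paper: the splitting $\|Gx\|^2=\|P_Ex\|^2+\|GP_{E^\perp}x\|^2$ obtained from $G^*G=I$ on $E$, the same estimate for (i)$\Rightarrow$(ii), the same normalization-and-approximation argument for (ii)$\Rightarrow$(iii), and an orthogonal projection as test operator for (iii)$\Rightarrow$(i). The differences are only cosmetic. You state explicitly that $E=\ker(I-G^*G)$, so that $S_H\cap E=M_G$ and hence $e/\|e\|\in M_G$; the paper uses this implicitly through $G$ being an isometry on $E$. You also test with $P_{E^\perp}$ instead of the projection onto $\overline{\operatorname{span}}\{v_n\}$.
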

	\begin{proof}
		First, we establish two properties of $G$ on $E$. Since $\|G\|=1$, the operator $I-G^*G$ is positive. For $x\in M_G$ we have $\langle G^*Gx,x\rangle=\|Gx\|^2=1=\langle x, x\rangle.$ Consequently, $\langle (I-G^*G)x,x\rangle=0$. Positivity implies $(I-G^*G)x=0$, so $G^*Gx=x$. By continuity this extends to $E$, and therefore $G^*G=I$ on $E.$ Consequently,
		\begin{enumerate}
			\item[(a)] $G$ is an isometry on $E$ and
			\item[(b)] if $u\in E$ and $v\in E^\perp$, then $\langle Gu,Gv\rangle=0$.
		\end{enumerate}
		Since $E$ is a closed subspace of the Hilbert space $H$, there exists an orthogonal projection $P:H\to E$. For $x\in S_H$ we write $x = Px + (I-P)x,$ where $Px\in E$ and $(I-P)x\in E^\perp$. Let $u=Px$ and $v=(I-P)x$. Then by using the fact in (a) and (b) we have
		\begin{equation}\label{eq1}
			\|Gx\|^2=\|u\|^2+\|Gv\|^2 .
		\end{equation}

		\medskip
		\noindent
		$(i)\implies (ii)$. Assume $\|Gv\|\le \gamma\|v\|$ for $v\in E^\perp$ with $\gamma<1$. Let $\|x\|=1$ then $x=u+v$ for some $u\in E$ and $v\in E^\perp$. Therefore, $u\perp v$ implies $1=\|u\|^2+\|v\|^2$. It then follows from \eqref{eq1} that $\|Gx\|^2\le 1-(1-\gamma^2)\|v\|^2.$ If $\|Gx\|>1-\delta$, then $(1-\delta)^2<1-(1-\gamma^2)\|v\|^2,$ so $\|v\|^2<\frac{2\delta-\delta^2}{1-\gamma^2}.$ It is easy to see that $\operatorname{dist}(x, E)=\|v\|$, thus, by choosing $\delta$ sufficiently small gives $\operatorname{dist}(x,E)<\varepsilon$.

		\medskip
		
		\noindent
		$(ii)\implies (iii).$ Since $M_G \subseteq \{x\in S_H:\|Gx\|>1-\delta\}$, for every $\delta>0$, $\sup_{\|Gx\|>1-\delta}\|Tx\|\ge \sup_{x\in M_G}\|Tx\|.$ Consequently, $\|T\|_G \ge \sup_{x\in M_G}\|Tx\|$. If we fix $\varepsilon>0$, there exists $\delta>0$ such that
		\begin{equation}\label{eq2}
			\|Gx\|>1-\delta \implies \operatorname{dist}(x,E)<\varepsilon.
		\end{equation}
		Choose $z\in E$ with $\|x-z\|<\varepsilon$ and set $y=z/\|z\|\in M_G$. Then $\|x-y\|<2\varepsilon$. Thus, for every $x$ satisfying equation \eqref{eq2} we have 
		\begin{align*}
			\|Tx\|\le \|Ty\| + \|T\|\,\|x-y\|\leq \sup_{y\in M_G}\|Ty\|+2\|T\|\varepsilon,
		\end{align*}
		and consequently, $\|T\|_G \le \sup_{\|Gx\|>1-\delta}\|Tx\| \le \sup_{y\in M_G}\|Ty\|+2\|T\|\varepsilon$. Letting $\varepsilon\to0$ gives $\|T\|_G \le \sup_{x\in M_G}\|Tx\|$. Therefore, $\|T\|_G = \sup_{x\in M_G}\|Tx\|$.

		\medskip

		\noindent
		$(iii)\implies (i).$ $(iii)\implies (i)$. Assume that $(i)$ fails. Then there exist $v_n\in S_H\cap E^\perp$ such that $\|Gv_n\|\to1$. Let $X=\overline{\operatorname{span}}\{v_n\}$ and denote by $P_X$ the orthogonal projection onto $X$. Since $P_Xv_n=v_n$, we have $\|P_Xv_n\|=1$, and because $\|Gv_n\|\to1$ it follows that $1\in S_G(P_X)$. Hence $\|P_X\|_G=1$. Since $X \subset E^\perp$, it follows that $E\subset X^\perp$. Because $M_G \subset E$, we obtain $M_G \subset X^\perp$. Hence $P_X x = 0$ for every $x \in M_G$. Therefore, $\sup_{x\in M_G}\|P_Xx\|=0,$ which contradicts $(iii)$. Thus $(i)$ must hold. 
		
		\medskip
		
		This completes the proof.
	\end{proof}

	We conclude this article with a comparison of $G$-norms. In other words, we obtain a sufficient condition for $\|T\|_{G_1}\le\|T\|_{G_2}$.

	\begin{theorem}
		Let $G_1, G_2 \in L(X)$ with $\|G_1\| = \|G_2\| = 1$. Suppose there exists a function $\phi : (0, 1] \to (0, 1]$ with $\phi(t) \to 1$ as $t \to 1$ such that for every $x \in S_X$,
		\[
		\|G_1x\| \ge t \implies \|G_2x\| \ge \phi(t).
		\]
		Then for every $T \in L(X)$, $\|T\|_{G_1} \le \|T\|_{G_2}$.
	\end{theorem}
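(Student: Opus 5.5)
The plan is a direct comparison of the sets over which the suprema in the definition of $\|\cdot\|_G$ are taken. The point is that the hypothesis pushes vectors that almost attain $\|G_1\|$ into vectors that almost attain $\|G_2\|$. Fix $T\in L(X)$ and write
\[
s_i(\delta):=\sup\{\|Tx\|: x\in S_X,\ \|G_ix\|>1-\delta\},\qquad i=1,2,
\]
so that $\|T\|_{G_i}=\inf_{\delta>0}s_i(\delta)$. Each $s_i$ is nondecreasing in $\delta$, so the infimum is a limit as $\delta\to 0^+$.

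The first step is to translate the hypothesis into an inclusion of sets. Take $\eta>0$. Since $\phi(t)\to 1$ as $t\to 1$, there is $t_0<1$ with $\phi(t)>1-\eta$ for all $t\in[t_0,1]$. Set $\delta_0:=1-t_0$. If $0<\delta\le \delta_0$ and $x\in S_X$ satisfies $\|G_1x\|>1-\delta$, put $t:=\|G_1x\|\in(1-\delta,1]\subset[t_0,1]$. The hypothesis applied with this $t$ gives $\|G_2x\|\ge \phi(t)>1-\eta$. Hence
\[
\{x\in S_X:\|G_1x\|>1-\delta\}\subseteq\{x\in S_X:\|G_2x\|>1-\eta\}\qquad (0<\delta\le\delta_0),
\]
and therefore $s_1(\delta)\le s_2(\eta)$.

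The second step is to take limits. For every $\eta>0$ we get $\|T\|_{G_1}\le s_1(\delta_0)\le s_2(\eta)$. Taking the infimum over $\eta>0$ yields $\|T\|_{G_1}\le\|T\|_{G_2}$.

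The only delicate point is how to read the hypothesis. It is stated for each fixed $t$, and $\phi$ need not be monotone. This is why I apply it at the exact value $t=\|G_1x\|$ instead of at the threshold $1-\delta$, and why I use the limit $\phi(t)\to1$ uniformly on a whole interval $[t_0,1]$. Since $\|G_1x\|\le 1$ always, one-sided convergence from the left is all that is needed. Beyond this, the argument is a routine monotonicity argument.
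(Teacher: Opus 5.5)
Your argument is correct in substance, and it differs from the paper's mainly in how it is organized. The paper first uses Proposition~\ref{P2.2}(ii) to choose $(x_n)\subset S_X$ with $\|G_1x_n\|\to 1$ and $\|Tx_n\|\to\|T\|_{G_1}$. It then applies the hypothesis at each fixed $t<1$ to get $\liminf_n\|G_2x_n\|\ge\phi(t)$, hence $\|G_2x_n\|\to1$, and concludes $\|T\|_{G_2}\ge\lim_n\|Tx_n\|$. You instead prove directly that $\{x\in S_X:\|G_1x\|>1-\delta\}\subseteq\{x\in S_X:\|G_2x\|>1-\eta\}$ for $0<\delta\le\delta_0(\eta)$, and then compare the suprema. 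This bypasses Propositions~\ref{DP1} and \ref{P2.2}, since no extremal sequence is needed. It also isolates the fact behind the theorem: an inclusion of near-norming sets that does not involve $T$ at all. The paper's version is shorter given the preliminaries it has already established. The key idea is the same in both: near-maximizers of $\|G_1\cdot\|$ are near-maximizers of $\|G_2\cdot\|$.

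One step needs repair. The hypothesis $\phi(t)\to1$ as $t\to1$ concerns $t<1$ only and says nothing about $\phi(1)$, which can be any number in $(0,1]$. So you get $\phi(t)>1-\eta$ on $[t_0,1)$, not on $[t_0,1]$. For $x\in M_{G_1}$, your choice $t=\|G_1x\|=1$ therefore yields only $\|G_2x\|\ge\phi(1)$.

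Your stated reason for avoiding the threshold is also mistaken: applying the hypothesis at $t=1-\delta$ needs no monotonicity of $\phi$. Indeed, $\|G_1x\|>1-\delta$ trivially gives $\|G_1x\|\ge1-\delta$, whence $\|G_2x\|\ge\phi(1-\delta)$. This exceeds $1-\eta$ because $1-\delta\in[t_0,1)$ for $0<\delta\le\delta_0$ (take $t_0\in(0,1)$ so that $1-\delta$ lies in the domain of $\phi$). This is exactly how the paper uses the hypothesis, and with this change your proof is complete. Alternatively, if $\|G_1x\|=1$, applying the hypothesis at every $t<1$ forces $\|G_2x\|=1$.
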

	
	\begin{proof}
		Fix $T \in L(X)$. By using the definition of $\|T\|_{G_1}$ via $\widetilde{S}_G(T)$ described in Proposition \ref{P2.2}, let $(x_n) \subset S_X$ be a sequence with $\|G_1x_n\| \to 1$ and $\|Tx_n\| \to \|T\|_{G_1}$. For any $t < 1$, eventually $\|G_1x_n\| \ge t$, so by hypothesis $\|G_2x_n\| \ge \phi(t)$. Since $\phi(t) \to 1$ as $t \to 1$, we conclude that $\|G_2x_n\| \to 1$. Hence $(x_n)$ is also a norming sequence for $G_2$. Therefore,
		\[
		\|T\|_{G_2} \ge \limsup_{n \to \infty} \|Tx_n\| = \|T\|_{G_1}.
		\]
		Thus, the proof is complete.
	\end{proof}
	
	\begin{question}
		What is the infinite-dimensional version of Theorem \ref{Th3.11}?
	\end{question}
	
	\begin{question}
		Can Theorem \ref{Th3.11} and Theorem \ref{Th3.12} be extended to the Banach space setting? 
	\end{question}
	
	\begin{question}
		If the previous answer is negative then find the operators $G$ for which the conclusion of Theorem \ref{Th3.11} continues to hold in the framework of general Banach spaces. 
	\end{question}

	\medskip

	\noindent
	\textbf{Conflict of Interest:} The authors declare no conflicts of interest.

	\bibliographystyle{plain}

    \end{document}